\newtheorem{theorem}{Theorem}[section]
\newtheorem{cor}[theorem]{Corollary}
\newtheorem{lem}[theorem]{Lemma}
\newtheorem{defn}[theorem]{Definition}
\newtheorem{exam}[theorem]{Example}
\newtheorem{remark}[theorem]{Remark}
\numberwithin{equation}{section}
\newcommand{\BC}{{\Bbb C}}
\newcommand{\BN}{{\Bbb  N}}
\newcommand{\BR}{{\Bbb  R}}
\newcommand{\BK}{{\Bbb K}}
\newcommand{\Lip}{{\rm Lip}}
\newcommand{\lip}{{\rm lip}}
\newcommand{\coz}{{\rm coz}}
\newcommand{\supp}{{\rm supp}}
\newcommand{\A}{{\mathcal{A}}}
\newcommand{\lo}{{\longrightarrow}}
\newcommand{\B}{{\mathcal{B}}}
\begin{document}
\setcounter{page}{1}

\title[]{Additive jointly separating maps and ring homomorphisms}

\author[Fereshteh Sady and  Masoumeh Najafi Tavani]{Fereshteh  Sady and Masoumeh Najafi Tavani}

 \address{Department of Pure Mathematics, Faculty of Mathematical Sciences, Tarbiat Modares University, Tehran 14115--134, Iran}
 \email{\textcolor[rgb]{0.00,0.00,0.84}{sady@modares.ac.ir}}

\address{Department of Mathematics, Faculty of Basic Sciences, Islamic Azad University, Islamshahr Branch, Islamshahr, Tehran, Iran}
\email{\textcolor[rgb]{0.00,0.00,0.84}{najafi@iiau.ac.ir}}

\subjclass[2010]{Primary 47B38; Secondary 47B48, 46J10.}

\keywords{boundedly normal, separating maps, vector-valued  function spaces,
Lipschitz functions, ring homomorphisms}


\begin{abstract}
Let $X$ and $Y$ be compact Hausdorff spaces,  $E$ and $F$ be real or complex
normed spaces and $A(X,E)$ be a subspace of $C(X,E)$. For a
function $f\in C(X,E)$, let $\coz(f)$ be the cozero set of $f$. A
pair of additive maps $S,T: A(X,E) \lo C(Y,F)$ is said to be
jointly separating if $\coz(Tf)\cap \coz(Sg)=\emptyset$ whenever
$\coz(f)\cap \coz(g)= \emptyset$. In this paper, first we give a partial
description of additive jointly separating maps between certain
spaces of vector-valued continuous functions (including spaces of
vector-valued Lipschitz functions, absolutely continuous functions
and continuously differentiable functions). Then we apply the
results to characterize continuous ring homomorphisms between
certain Banach algebras of vector-valued continuous functions. In particular, the results provide some generalizations of the recent results on unital homomorphisms between vector-valued Lipschitz algebras, with a different approach.
\end{abstract}
\maketitle

\section{Introduction}
For a compact Hausdorff space $X$ and a real or complex Banach space  $E$, let $C(X,E)$ be the Banach space
of all continuous $E$-valued functions on $X$ under the supremum norm $\|f\|_\infty= \sup_{x\in X} \|f(x)\|$. In the scalar cases, that is the cases where $E=\Bbb C$, respectively  $E=\Bbb R$, this space is denoted by $C(X)$, respectively $C_\BR(X)$.

For compact Hausdorff spaces $X$ and $Y$ and real or complex Banach spaces $E$ and $F$, an additive  map $T$ from a subspace $A(X,E)$ to $C(Y, F)$ is said to be {\em separating} or {\em disjointness preserving} if for any pair $f,g$ of elements of $A(X,E)$ with disjoint cozeros, their images $Tf$ and $Tg$ also have disjoint cozeros.
Clearly in the scalar case, $T$ is separating if and only if it is zero product preserving, in the sense that $f\cdot g=0$ implies that $Tf\cdot Tg=0$. In particular, in this case, all (ring) homomorphisms between subalgebras of $C(X)$ (as well as $C_\BR(X)$) are separating.

Weighted composition operators are standard examples of separating maps between spaces of functions. More generally, if $X, Y$ are compact Hausdorff spaces, $E,F$ are Banach spaces and $A(X,E)$ and $A(Y,F)$ are subspaces of $C(X,E)$ and $C(Y,F)$, respectively, then any additive map $T:A(X,E)\longrightarrow A(Y,F)$ of the form
\[ Tf(y)=J_y(f(\varphi(y)))\;\;\;\;\;\;\;\;(f\in A(X,E))\]
where $\varphi:Y\longrightarrow X$ is a continuous map and $\{J_y\}_{y\in Y}$ is a family of additive maps from $E$ to $F$ is a
separating map.

Linear separating maps between  various spaces of continuous functions (in either of  scalar or vector valued case) have been studied for many years, see for instance \cite{bnt,fh,f,j,jw}. In most cases, it is shown that for certain subspaces of continuous functions any continuous linear separating map is of the above form, which is called a generalized weighted composition operator.
Continuous bilinear maps $\phi$ from $C^1[0,1] \times C^1[0,1]$
to a Banach space $E$ such that $f\cdot g=0$ implies $\phi(f,g)=0$ have been studied in  \cite{abcev}.  A similar problem has been considered in  \cite{aev} for the Banach algebra of  little Lipschitz functions instead of $C^1([0,1])$. Clearly any separating map $T: \mathcal A \lo \mathcal B$  between spaces of scalar-valued functions  $\mathcal A$ and $\mathcal B$, satisfies this implication for $\phi(f,g)=Tf \cdot Tg$.

Linear separating maps between vector-valued function spaces have been considered, for instance, in \cite{a,gjw, hbn,vw}.
Linear separating bijections  between spaces of vector-valued
continuous functions whose inverses are also separating , were studied in \cite{gjw} and  it was shown
 that such a map induces a homeomorphism between the underlying topological spaces. Similar results were
 given in \cite{vw} for such maps between vector-valued little Lipschitz function spaces.
In \cite{du}, Dubarbie studied
separating linear bijections on spaces of vector-valued absolutely continuous functions defined on compact
subsets of the real line.

On the other hand, in \cite{bot}, Botelho and Jamison  characterized  unital homomorphisms between vector-valued Banach algebra of Lipschitz functions with values in the Banach algebras $c$  and $\l_\infty$, as generalized weighted composition operators.
Then in \cite{Oi}, Oi extended  the result for $C(K)$-valued Lipschitz algebras, where  $K$ is a compact Hausdorff space. More general cases of unital homomorphisms between certain  Banach algebra-valued continuous functions have been studied recently in \cite{Hat}.

In this paper we first study a pair $T,S$ of additive maps between certain spaces of vector-valued continuous functions (on a compact Hausdorff space) which jointly preserve disjointness of cozero sets of functions. The results can be applied, for instance,  whenever $T$ and $S$ are defined between vector-valued Lipschitz functions, absolutely continuous functions and ($n$-times) continuously differentiable functions on the unit interval. Then considering the case that the target spaces are Banach algebras, we give some results concerning continuous unital ring homomorphisms between some vector-valued algebras  of functions. Hence, the results provide some generalizations of the results of \cite{bot} and \cite{Oi} with a different approach.

\section{ Main results}
We use the notation $\Bbb K$ for the field of real or complex
numbers. Let $X$  be a compact  Hausdorff space and $E$ be a Banach space
over $\Bbb K$. For $f\in C(X,E)$, we denote  the cozero set of $f$ by
$\coz(f)$, that is $\coz(f)= \{x\in X: f(x)\neq 0\}$. By a {\em
constant function} in $C(X,E)$ we mean a function on $X$ sending
all points of  $X$ to a fixed element $e \in E$. For $e\in E$ we
denote its corresponding constant function by $c_e$. For a subspace $A(X,E)$ of $C(X,E)$ and $x\in X$, the map $\delta_x: A(X,E) \longrightarrow E$ is defined by $\delta_x(f)=f(x)$.
For  $f\in C(X)$ and $h\in C(X,E)$, $fh\in C(X,E)$ is defined by
$fh(x)=f(x) h(x)$ for all $x\in X$.

\begin{defn}{\rm
Let $X$ be a compact Hausdorff space and $E$ be a normed  space
over $\Bbb K$.

(i) A $\Bbb K$-subspace $A(X)$ of $C(X,\Bbb K)$ is called {\em
boundedly normal} if there exists a constant  $M>0$ such that for
any pair $K,K'$ of disjoint closed subsets of $X$ there exists
$f\in A(X)$ with $\|f\|_\infty \le M$, $f=0$ on $K$ and $f=1$ on
$K'$.

(ii) We say that a subspace $A(X,E)$ of $C(X,E)$ is {\em nice} if
there exists a boundedly normal subspace $A(X)$ of $C(X,\BK)$
containing constants such that $A(X) \cdot A(X,E) \subseteq
A(X,E)$, where $A(X) \cdot A(X,E)=\{fh: f\in A(X), h\in A(X,E)\}$.
}
\end{defn}
Here are some examples of nice subspaces of $C(X,E)$.
\begin{exam}\label{exam}
{\rm Let $E$ be a $\Bbb K$-normed space.

{\rm (i)} For a compact metric space $(X,d)$ and $\alpha\in
(0,1]$, let $\Lip_\alpha(X,E)$ be the space of all functions $f: X
\longrightarrow E$ such that \[L(f)=\sup_{x\neq y}
\frac{\|f(x)-f(y)\|}{d^\alpha(x,y)} <\infty.\] Then
$\Lip_\alpha(X,E)$ is a normed space with respect to the norm
$\|f\|_\Lip =\|f\|_\infty+L(f)$, $f\in \Lip_\alpha(X,E)$,
which is complete whenever $E$ is a Banach space. For $\alpha \in
(0,1)$, the closed subspace $\lip_\alpha(X,E)$  of
$\Lip_\alpha(X,E)$ consists of all functions $f\in
\Lip_\alpha(X,E)$ satisfying $ \lim_{d(x,y)\to 0}\frac
{\|f(x)-f(y)\|}{d^\alpha(x,y)}=0$. It is easy to see that  $\Lip_\alpha(X,\Bbb K)$, for $\alpha \in (0,1]$,  and $\lip_\alpha(X, \Bbb K)$, for $\alpha\in (0,1)$,  are  boundedly
normal subspaces of $C(X,\Bbb K)$. Meanwhile, $\Lip_{\alpha}(X,E)$ and $\lip_\alpha(X,E)$ are both nice
subspaces of $C(X,E)$. Moreover, they are Banach algebras if so is
$E$. For the case that $\alpha=1$, we use the notation $\Lip(X,E)$
for $\Lip_{\alpha}(X,E)$.

\vspace*{.15cm}

{\rm (ii)} For $n\in \BN$,  let $C^n([0,1],E)$ be the space
of all continuously $n$-times differentiable functions $f:[0,1]
\longrightarrow E$. Then $C^n([0,1],E)$ is a normed space  with the  norm $\|f\|=\sum_{i=0}^n
\frac{\|f^{(i)}\|_\infty}{i!}$, $f\in C^n([0,1],E)$ which is complete if $E$ is a Banach space.  Also
$C^n([0,1],E)$ is a nice subspace of $C([0,1],E)$. The same is
true for the subspace $\Lip^n([0,1],E)$ of $C^n([0,1],E)$
consisting of all Lipschitz functions  $f:[0,1]\longrightarrow E$ such that
for all $i=1,...,n$, $f^{(i)}\in \Lip([0,1],E)$. We should note
that if $E$ is a Banach algebra, then $C^n([0,1]),E)$ is  a
Banach algebra with respect to the defined norm. Similarly,
$\Lip^n([0,1],E)$ is a Banach algebra with respect to the
following norm
\[\|f\|=\sum_{i=0}^n \frac{\|f^{(i)}\|_\Lip}{i!}\;\;\;\;\;\;\;\; (f\in
\Lip^n([0,1],E).\]

\vspace*{.15cm}

{\rm (iii)} For a compact subset $X$ of the real line,  let ${\rm
AC}(X,E)$ be the space of all absolutely continuous
$E$-valued functions on  $X$. Then ${\rm AC}(X,E)$ is a nice subspace of
$C(X,E)$. Moreover,  $\|f\|=\|f\|_\infty +{\rm
var}(f)$, $f\in {\rm AC}(X,E)$, defines a norm on ${\rm
AC}(X,E)$, where ${\rm var}(f)$ is the total variation of $f\in
{\rm AC}(X,E)$ and ${\rm AC}(X,E)$ is complete whenever $E$ is a Banach space.  If $E$ is a Banach algebra, then so is $({\rm AC}(X,E),
\|\cdot \|)$.
}
\end{exam}

\begin{defn}{\rm  Let $X,Y$ be compact Hausdorff spaces, $E, F$ be $\Bbb K$-normed spaces and
$A(X,E)$ be a subspace of $C(X,E)$. A pair $T,S: A(X,E)
\longrightarrow C(Y,F)$ of additive maps is said to be {\em
jointly separating} if $\coz(f)\cap \coz(g)=\emptyset$ implies
that $\coz(Tf)\cap \coz(Sg)=\emptyset$ for all $f,g\in A(X,E)$.}
\end{defn}

We should note that for any pair of functions $h,k\in C(Y,F)$ we have
$\coz(h)\cap \coz(k)=\emptyset$ if and only if for each $v^*\in
F^*$ and $y\in Y$, $\nu^*(h(y)) \cdot \nu^*(k(y))=0$ if and only
if for each $v^*, w^*\in F^*$ and $y\in Y$, $\nu^*(h(y)) \cdot
w^*(k(y))=0$. Hence a pair of additive maps  $T,S: A(X,E)
\longrightarrow C(Y,F)$ is jointly separating if and only if $(v^*\circ Tf)\cdot (v^*\circ Sg)=0$ holds for all $v^*\in F^*$ whenever $f,g\in A(X,E)$ with $\coz(f)\cap \coz(g)=\emptyset$. This is also equivalent to say that $(v^*\circ Tf)\cdot (w^*\circ Sg)=0$ holds for all $v^*,w^*\in F$ for such functions  $f,g\in A(X,E)$.

Next lemma gives, in particular, the general form of continuous additive jointly separating functionals on boundedly normal subspaces of $C(X)$.

\begin{lem}\label{fun}
Let $X$ be a compact Hausdorff space,  and $\A$ be a boundedly
normal $\Bbb K$-subspace of $C(X,\Bbb K)$ containing constants.
Let $n\in \Bbb N$ and  $\varphi_1,...,\varphi_n: \A
\longrightarrow \Bbb K$ be continuous nonzero additive
functionals on $\A$ such that $\Pi_{j=1}^n f_j=0$ implies that
$\Pi_{j=1}^{n}\, \varphi_j(f_j)=0$ for all $f_1,...f_n\in \A$.
Then there exists $x\in X$  such that $\varphi_j(f)=
\varphi_j(f(x))$ for all $f\in \A$ and $j=1,...,n$. In particular,
in the case that  $\Bbb K=\Bbb C$,  $\varphi_j=\varphi_j(1) \,{\rm Re}
(\delta_x)+\varphi_j(i)\, {\rm Im} (\delta_x)$  for all
$j=1,...,n$ and in the case that $\Bbb K=\Bbb R$,  $\varphi_j=\varphi_j(1)
\,\delta_x$ for all $j=1,...,n$.
\end{lem}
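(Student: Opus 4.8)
The plan is to localise each $\varphi_j$ at a point via a ``support'' argument, to pin these supports down to a single common point using the product hypothesis, and finally to compute the resulting additive functional on $\Bbb K$. For each $j$ call an open set $U\subseteq X$ \emph{$\varphi_j$-null} if $\varphi_j(h)=0$ for every $h\in\A$ with $\coz(h)\subseteq U$, let $W_j$ be the union of all $\varphi_j$-null open sets, and put $K_j:=X\setminus W_j$. Bounded normality produces, for any finite open cover of $X$, Urysohn functions $h_i\in\A$ with $\|h_i\|_\infty\le M$, and hence (working inside $\A$ — this is where, besides bounded normality, one uses that $\A$ is stable under the relevant products, as in Example~\ref{exam}) a subordinate partition of unity $p_1=h_1$, $p_i=h_i\prod_{l<i}(1-h_l)$ with $\sum_i p_i\equiv 1$ and $\coz(p_i)$ inside the $i$-th member of the cover; thus every $h\in\A$ splits as $h=\sum_i p_i h$. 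From this and a compactness argument one gets: (a) $W_j$ is itself $\varphi_j$-null, so $\coz(h)\cap K_j=\emptyset$ forces $\varphi_j(h)=0$; and (b) if some $K_j$ were empty, decomposing an arbitrary $h\in\A$ along a finite $\varphi_j$-null cover would give $\varphi_j(h)=0$, contradicting $\varphi_j\ne 0$. Hence each $K_j\ne\emptyset$.

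Next I would use the product hypothesis to collapse the $K_j$ to one common point; here $n\ge2$ is needed (for $n=1$ the hypothesis is vacuous). Suppose $i\ne j$, $p\in K_i$, $q\in K_j$ with $p\ne q$, and pick disjoint open sets $U\ni p$, $V\ni q$. Since $p\notin W_i$ there is $u\in\A$ with $\coz(u)\subseteq U$ and $\varphi_i(u)\ne0$; similarly $v\in\A$ with $\coz(v)\subseteq V$ and $\varphi_j(v)\ne0$; and for each $l\notin\{i,j\}$ pick $w_l\in\A$ with $\varphi_l(w_l)\ne0$. The product of these functions — $u$ in slot $i$, $v$ in slot $j$, $w_l$ elsewhere — is the zero function because $\coz(u)\cap\coz(v)=\emptyset$, so the hypothesis gives $\varphi_i(u)\,\varphi_j(v)\prod_l\varphi_l(w_l)=0$, a contradiction. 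Therefore for $i\ne j$ no point of $K_i$ is distinct from a point of $K_j$, and combined with $K_l\ne\emptyset$ for all $l$ this forces $K_1=\dots=K_n=\{x\}$ for a single $x\in X$.

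Finally I would show that $\varphi_j$ factors through $\delta_x$, i.e. $\varphi_j(f)=\varphi_j(c_{f(x)})$ for all $f\in\A$; since $f-c_{f(x)}$ vanishes at $x$, it suffices to prove that $f(x)=0$ implies $\varphi_j(f)=0$. Given $\varepsilon>0$, the set $U=\{t\in X:|f(t)|<\varepsilon\}$ is an open neighbourhood of $x$; choose open $V$ with $x\in V\subseteq\overline V\subseteq U$ and, by bounded normality, $\beta\in\A$ with $\beta\equiv1$ on $\overline V$, $\beta\equiv0$ off $U$, $\|\beta\|_\infty\le M$. Then $(1-\beta)f$ vanishes on $\overline V$, so $\coz((1-\beta)f)\cap K_j=\emptyset$ and $\varphi_j((1-\beta)f)=0$ by (a), while $\|\beta f\|_\infty\le M\varepsilon$, so continuity of $\varphi_j$ makes $|\varphi_j(\beta f)|$ arbitrarily small; hence $\varphi_j(f)=\varphi_j(\beta f)+\varphi_j((1-\beta)f)\to0$, i.e. $\varphi_j(f)=0$. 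To read off the last assertion, note $e\mapsto\varphi_j(c_e)$ is a continuous additive map $\Bbb K\to\Bbb K$, hence $\Bbb R$-linear; so it equals $t\mapsto t\,\varphi_j(1)$ when $\Bbb K=\Bbb R$ and $e\mapsto\R(e)\,\varphi_j(1)+{\rm Im}(e)\,\varphi_j(i)$ when $\Bbb K=\Bbb C$, and substituting $e=f(x)$ into $\varphi_j(f)=\varphi_j(c_{f(x)})$ yields the displayed formulas.

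The main obstacle is the last step (and its twin, fact (a)): one must pair the sup-norm smallness of $\beta f$ — which is all that bounded normality delivers — with the continuity of $\varphi_j$, and one must make sure that the auxiliary functions $\beta f$, the $p_i$, the products $p_i h$ and $h_i\prod_{l<i}(1-h_l)$ all lie in $\A$; the second point is precisely the algebraic closure enjoyed by the concrete spaces of Example~\ref{exam}. The remaining ingredients — nonemptiness of the supports and their collapse to a point — are the standard support-plus-disjointness machinery.
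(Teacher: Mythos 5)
Your argument has a genuine gap: it silently strengthens the hypothesis on $\A$ from ``boundedly normal $\Bbb K$-subspace containing constants'' to ``closed under multiplication''. Every key step of your support machinery manufactures products of elements of $\A$ and then applies $\varphi_j$ to them: the partition of unity $p_i=h_i\prod_{l<i}(1-h_l)$, the decompositions $h=\sum_i p_ih$ used for (a) and (b), and in the final step the functions $\beta f$ and $(1-\beta)f$. None of these is guaranteed to lie in $\A$, so $\varphi_j$ cannot even be evaluated on them. You acknowledge this and appeal to ``the algebraic closure enjoyed by the concrete spaces of Example~\ref{exam}'', but that is adding an unstated hypothesis rather than closing the gap: the lemma is applied in the proof of the main theorem to the scalar space $A(X)$ attached to a nice subspace $A(X,E)$, and the definition of ``nice'' only requires $A(X)\cdot A(X,E)\subseteq A(X,E)$, not $A(X)\cdot A(X)\subseteq A(X)$. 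So the statement must be proved for a bare boundedly normal subspace, where your partition-of-unity and cut-off constructions are unavailable. (A secondary, fixable, inaccuracy: your claim (a) as stated --- $\coz(h)\cap K_j=\emptyset$ forces $\varphi_j(h)=0$ --- does not follow from the partition-of-unity argument alone, which only gives the case where $h$ vanishes on a neighbourhood of $K_j$; the stronger form needs the continuity-plus-approximation step you use later. As it happens you only invoke the neighbourhood version, so this is not fatal.)

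The paper avoids all multiplication inside $\A$: since the $\varphi_j$ are sup-norm continuous and real-linear, it reduces to $n=2$, extends the associated complex-linear functionals by Hahn--Banach to all of $C(X)$, represents them by regular Borel measures $\mu,\nu$, and uses joint separation together with bounded normality (which is only needed to produce individual Urysohn-type functions in $\A$, never products) to get $|\mu|(K_1)\,|\nu|(K_2)=0$ for disjoint closed sets, forcing both measures to be supported at a common single point $x$; the displayed formulas then follow by tracking real and imaginary parts. If you want to keep your localisation picture, you would have to run it at the level of these extended functionals (equivalently, of the representing measures) rather than inside $\A$ itself; as written, the proof does not establish the lemma in the generality in which it is stated and used.
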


\begin{proof}
We note that, by continuity assumption,  $\varphi_1,...,
\varphi_n$ are real-linear. We first show that the proof can be
reduced to the case $n=2$. Suppose that the lemma has been proven
in this case and choose $u_j\in A$ with $\varphi_j(u_j)\neq 0$,
$j=1,...,n$. Then for any pair of functions $f_1,f_2\in \A$ with
$f_1f_2$=0 we have $f_1f_2 u_3 \cdots u_n=0$.  Then  by
hypothesis, $\varphi_1(f_1)\varphi_2(f_2)= 0$. Hence, by the case
where $n=2$, there exists $x_{1,2}\in X$ such that
$\varphi_1(f)=\varphi_1(f(x_{1,2}))$ and $\varphi_2(f)=\varphi_2(f(x_{1,2}))$,
for all $f\in \A$. A similar argument shows that there exists a
point $x_{2,3}\in X$ such that $\varphi_2(f)=\varphi_2(f(x_{2,3}))$ and
$\varphi_3(f)=\varphi_3(f(x_{2,3}))$ for all $f\in \A$. Thus
\[ \varphi_2(f(x_{1,2}))=\varphi_2(f)=\varphi_2(f(x_{2,3}))
\;\;\;\;\;\;\;(f\in \A).\] We now show that $x_{1,2}=x_{2,3}$. Since
$\varphi_2$ is nonzero, the above equalities show that there
exists a scalar $c\in \BK$ such that $\varphi_2(c)\neq 0$. If
$x_{1,2}\neq x_{2,3}$, then  we can find a function $f\in \A$ such that
$f(x_{1,2})=1$ and $f(x_{2,3})=0$. Then $cf\in \A$ and, using the above
equalities, we have
\[ \varphi_2(c)=\varphi_2(cf(x_{1,2}))=\varphi_2(cf)=\varphi_2(cf(x_{2,3}))=0,\]
a contradiction.  Hence $x_{1,2}=x_{2,3}$.  This argument can be applied for all pairs $\varphi_j,\varphi_k$, where $j,k\in \{1,...,n\}$ are distinct,  to conclude that there exists a point $x\in X$ such that for all $j=1,...,n$
\[ \varphi_j(f)=\varphi_j(f(x))\qquad \qquad  (f\in \A).\]
By the above argument  we can assume without loss of generality
that $n=2$.

First assume that $\Bbb K=\Bbb C$. Then $\A$ is a complex subspace
of $C(X)$ and $\varphi_1,\varphi_2:\A \longrightarrow \BC$ are
nonzero real-linear jointly separating functionals. Hence for $f,g\in \A$ with
disjoint cozero sets, at least one of the equalities
$\varphi_1(f)=0$ and  $\varphi_2(f)=0$ holds. This easily implies
that  for distinct $j,k\in \{1,2\}$, the pair ${\rm
Re}(\varphi_j)$, ${\rm Re}(\varphi_k):\A \longrightarrow \BR$ and
also the pair ${\rm Re}(\varphi_j)$, ${\rm Im}(\varphi_k):\A
\longrightarrow \BR$ of real-linear functionals  on the complex
space $\A$ are jointly separating. Replacing $\varphi_1$ and $\varphi_2$ by $i \varphi_1$ and $i \varphi_2$ if it is necessary,  we may assume that ${\rm Re}(\varphi_1)$ and ${\rm Re}(\varphi_2)$ are both nonzero.
Now we consider the complex-linear functionals $\Phi_1, \Phi_2:\A \longrightarrow \BC$
defined by
\[ \Phi_1(f)={\rm Re}(\varphi_1(f))-i{\rm Re}(\varphi_1(if))\qquad \qquad (f\in
A(X)),\]
 and
\[\Phi_2(f)={\rm Re}(\varphi_2(f))-i{\rm Re}(\varphi_2(if))\qquad \qquad (f\in A(X)).\]
 If $f,g\in \A$ such that $f\cdot g=0$, then $f\cdot
ig=if\cdot g=0=if\cdot ig$ and since ${\rm Re}(\varphi_1)$ and
${\rm Re}(\varphi_2)$ are nonzero jointly separating we deduce that
$\Phi_1,\Phi_2$ are also nonzero jointly separating. We note that
$\Phi_1,\Phi_2$, as  continuous nonzero complex linear functionals
on $\A$,  can be extended to continuous complex-linear functionals
on $C(X)$. Hence there exist (nonzero) complex regular Borel measures $\mu$
and $\nu$ on $X$ satisfying
\[\Phi_1(f)=\int_X f d\mu\;\;\;\; {\rm and} \;\;\; \Phi_2(f)=\int_{X} fd\nu \]
for all $f\in \A$. Using the fact that  $\A$ is boundedly normal
and $\Phi_1,\Phi_2$ are jointly separating,  we can easily deduce
 that for any pair $K_1,K_2$ of disjoint closed subsets  of
$X$ we have $$|\mu|(K_1)\, |\nu|(K_2)=0.$$ We note that
$\supp(\mu)=\supp(\nu)=\{x\}$ for some $x\in X$. Indeed, if there
are distinct points  $x_1 \in \supp(\mu)$ and $x_2 \in
\supp(\nu)$, then choosing  neighborhoods $U_1$ and $U_2$ of $x_1$
and $x_2$, respectively  with disjoint closures, it follows from
the above argument that  $|\mu|(\overline{U_1})\,
|\nu|(\overline{U_2})=0$, which is impossible. This clearly
implies that  $\supp(\mu)=\supp(\nu)=\{x\}$ for some $x\in X$ and consequently  we have  $\Phi_1=\alpha_1 \delta_x$ and $\Phi_2= \alpha_2 \delta_x$
for some $\alpha_1, \alpha_2 \in \Bbb C\backslash \{0\}$. Thus
\[{\rm Re}(\varphi_1)={\rm Re}(\alpha_1 \delta_x)\] and \[{\rm Re}(\varphi_2)={\rm
Re}(\alpha_2 \delta_x).\]
We claim that there are $\beta_1,\beta_2\in \BC$ such that
\begin{align}\label{Im}
{\rm Im}(\varphi_1)={\rm Re}(\beta_1
\delta_x), \; \; {\rm Im}(\varphi_2)={\rm Re}(\beta_2 \delta_x).
\end{align}
We prove the first equality, since the second one is proven in a similar manner. If ${\rm Im}(\varphi_1)=0$  then clearly the desired equality holds for $\beta_1=0$. Hence assume that  ${\rm Im}(\varphi_1)\neq 0$. Then ${\rm Re}(i\varphi_1)\neq 0$ and since ${\rm Re}(\varphi_2)\neq 0$, using the above argument for jointly separating functionals $i\varphi_1$ and $\varphi_2$ we conclude that there exist $x'\in X$ and  scalars  $\lambda_1, \lambda_2\in \BC\backslash\{0\}$ such that
\[{\rm Re}(i \varphi_1)={\rm Re}(\lambda_1 \delta_{x'})\] and \[{\rm Re}(\varphi_2)={\rm
Re}(\lambda_2 \delta_{x'}).\]
Hence ${\rm Re}(\alpha_2 \delta_{x})={\rm Re}(\varphi_2)={\rm
Re}(\lambda_2 \delta_{x'})$. This implies that $x=x'$.  Indeed, if $x\neq x'$ then we can choose $f\in \A$
satisfying $f(x)=0$ and $f(x')=\overline{\lambda_2}$. Then
\[0={\rm Re} (\alpha_2 f(x))={\rm Re} (\varphi_2(f))={\rm Re}(\lambda_2f(x'))=|\lambda_2|^2\neq
0,\] which is a contradiction. Hence $x=x'$ and consequently
\[{\rm Im}(\varphi_1)=-{\rm Re}(i\varphi_1)={\rm Re}(-\lambda_1 \delta_{x}),\] as desired.
Thus (\ref{Im}) hold for some $\beta_1, \beta_2 \in \BC$.
This implies that $\varphi_1(f)= {\rm Re}(\alpha_1 f(x))+i{\rm Re}(\beta_1 f(x))$ for all $f\in \A$. In fact, an easy calculation shows that
\[\varphi_1(f)= \varphi_1(1) {\rm Re}(f(x))+\varphi_1(i)  {\rm Im}(f(x))\qquad \qquad  (f\in
\A).\]  In particular,
$\varphi_1(f)=\varphi_1(f(x))$ holds for all $f\in \A$. Similarly
$\varphi_2= \varphi_2(1)\, {\rm Re}(\delta_x)+\varphi_2(i)\,{\rm
Im}(\delta_x)$, as desired.

 Assume now that $\Bbb K=\Bbb
R$. Then $\A$ is a real subspace of $C_{\Bbb R}(X)$ and
$\varphi_1,\varphi_2: \A \longrightarrow \BR$ are jointly
separating real-linear functionals on $\A$. Extending
$\varphi_1,\varphi_2$ to continuous real-linear functionals on
$C_{\Bbb R}(X)$, the same argument can be applied to show that
$\varphi_j=\varphi_j(1) \delta_{x}$, $j=1,2$, for some point $x\in
X$.
\end{proof}
Let $X,Y$ be compact Hausdorff spaces and $E,F$ be normed spaces over $\Bbb K$.
For a  pair $T,S: A(X,E) \longrightarrow C(Y,F)$ of additive  maps on a subspace $A(X,E)$ of $C(X,E)$ containing constants, we
put
\[ Y_0= \big(\cup_{e\in E} \coz(T(c_e))\big) \cap \big(\cup_{e\in E} \coz(S(c_e))\big), \]
and
\[ Y_c=\{y\in Y_0: \delta_y\circ T, \delta_y\circ S:  A(X,E)\longrightarrow F \; {\rm are} \; \|\cdot\|_\infty {-\rm weak}\; {\rm continuous} \; \}. \]
It is obvious that if there exists at least one  constant function whose images under $T$ and $S$ are constant, then $Y_0=Y$.
We denote the space of all bounded real-linear operators from $E$ to $F$  by $B_{\Bbb R}(E,F)$.
\begin{theorem} Let $X,Y$ be compact Hausdorff spaces, $E,F$ be
normed spaces over $\Bbb K$ and $A(X,E)$ be a nice subspace of
$C(X,E)$ containing constants.  Let $T,S: A(X,E) \longrightarrow
C(Y,F)$ be additive jointly separating maps. Then there exist a
continuous map $\Phi:Y_c \longrightarrow X$ and two families
$\{\Lambda_y\}_{y\in Y_c}$ and $\{\Lambda'_y\}_{y\in Y_c}$ of
real-linear operators from $E$ to $F$ such that
\[ Tf(y)= \Lambda_{y}(f(\Phi(y))) \qquad \qquad (f\in
A(X,E), y\in Y_c),\] and
\[Sf(y)= \Lambda'_{y}(f(\Phi(y))) \qquad \qquad (f\in
A(X,E), y\in Y_c).\] In particular, if $T,S$ are continuous, then
$Y_c=Y_0$ and, moreover, for each $y\in Y_c$, $\Lambda_y\in
B_\BR(E,F)$ and, furthermore,  $y\mapsto \Lambda_y$ is a
continuous map from $Y_c$ to $B_\BR(E,F)$ with respect to the
strong operator topology on $B_\BR(E,F)$.
\end{theorem}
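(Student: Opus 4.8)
The plan is to reduce the vector-valued statement to the scalar functional case covered by Lemma~\ref{fun}, applied pointwise at each $y\in Y_c$. Fix $y\in Y_c$. For each $v^*\in F^*$ the maps $f\mapsto v^*(Tf(y))$ and $f\mapsto v^*(Sf(y))$ are additive functionals on $A(X,E)$; I would first push these down to the scalar algebra $A(X)$ (the boundedly normal subspace witnessing niceness) by fixing $e\in E$ and considering $\psi\mapsto v^*(T(\psi\,c_e)(y))$ for $\psi\in A(X)$. For $y\in Y_c$ these are $\|\cdot\|_\infty$-continuous, and the joint separation of $T,S$ together with the observation recorded after the definition (disjoint cozeros $\iff$ all products $v^*(\cdot)\,w^*(\cdot)$ vanish) shows that, for suitable $e,e'\in E$ and $v^*,w^*\in F^*$, the pair of scalar functionals $\psi\mapsto v^*(T(\psi c_e)(y))$ and $\psi\mapsto w^*(S(\psi c_{e'})(y))$ is jointly separating on $A(X)$ in the sense of Lemma~\ref{fun} (the case $n=2$). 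Whenever both functionals are nonzero, Lemma~\ref{fun} produces a point of $X$ at which they are point evaluations composed with the obvious $\BK$-linear map.

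The key step then is to check that the point produced is \emph{independent} of the choices of $e,e',v^*,w^*$, so that it can be called $\Phi(y)$. I would argue this exactly as in the proof of Lemma~\ref{fun}: if two admissible choices gave distinct points $x_1\neq x_2$ of $X$, pick $\psi\in A(X)$ with $\psi(x_1)=1$, $\psi(x_2)=0$ (possible since $A(X)$ is boundedly normal and contains constants), multiply by an appropriate constant function to activate a nonzero value of the relevant functional, and derive $0\neq(\text{something})=0$. One must handle the bookkeeping that some of the functionals $v^*\circ\delta_y\circ T$ may vanish identically; here the definition of $Y_0$ is used: $y\in Y_0$ guarantees there exist $e,e'$ with $T(c_e)(y)\neq 0$ and $S(c_{e'})(y)\neq 0$, hence some $v^*,w^*$ with the functionals nonzero, so the point $\Phi(y)$ exists. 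Once $\Phi(y)$ is well-defined, for each fixed $e\in E$ and each $v^*\in F^*$ we get $v^*(T(\psi c_e)(y))$ is a fixed $\BK$-linear functional in $\psi(\Phi(y))$, and letting $v^*$ range over $F^*$ this means $T(\psi c_e)(y)$ depends only on $\psi(\Phi(y))$; setting $\psi\equiv 1$ recovers $T(c_e)(y)$, and general $f\in A(X,E)$ is handled by noting that $A(X,E)$ is nice so products $\psi c_e$ lie in $A(X,E)$ and a density/linearity argument in the first coordinate, combined with additivity of $T$, lets us define $\Lambda_y(e):=T(c_e)(y)$ and verify $Tf(y)=\Lambda_y(f(\Phi(y)))$; additivity of $\Lambda_y$ is inherited from $T$, and real-linearity on $\BK$ follows from the scalar formula in Lemma~\ref{fun}. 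The same construction with $S$ gives $\Lambda'_y$, and it uses the \emph{same} $\Phi(y)$ because the cross-products $v^*(Tf(y))\,w^*(Sg(y))$ already forced a common point above.

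The continuity of $\Phi:Y_c\to X$ I would get from a standard net/ultrafilter argument: if $y_\alpha\to y$ in $Y_c$ but $\Phi(y_\alpha)\not\to\Phi(y)$, pass to a subnet with $\Phi(y_\alpha)\to x\neq\Phi(y)$, separate $\Phi(y)$ and $x$ by $\psi\in A(X)$ as above, choose $e,v^*$ with $v^*(T(c_e)(y))\neq0$, and compare $v^*(T(\psi c_e)(y_\alpha))=v^*(\Lambda_{y_\alpha}(e))\,\psi(\Phi(y_\alpha))$-type expressions using continuity of the function $T(\psi c_e)\in C(Y,F)$ to reach a contradiction. For the final ``in particular'' clause, assuming $T,S$ continuous: a continuous additive map is bounded, hence $\delta_y\circ T$ is $\|\cdot\|_\infty$-continuous at every $y$, so $Y_c=Y_0$; boundedness of $T$ gives $\|\Lambda_y(e)\|=\|T(c_e)(y)\|\le\|T\|\,\|e\|$, so $\Lambda_y\in B_\BR(E,F)$; and strong-operator continuity of $y\mapsto\Lambda_y$ is immediate since for fixed $e$, $y\mapsto\Lambda_y(e)=T(c_e)(y)$ is the continuous function $T(c_e)\in C(Y,F)$ restricted to $Y_c$.

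\medskip

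The step I expect to be the main obstacle is pinning down that $\Phi(y)$ does not depend on the auxiliary data $(e,e',v^*,w^*)$ and that the \emph{same} point works simultaneously for $T$ and for $S$: this requires carefully chaining the two-functional case of Lemma~\ref{fun} across overlapping pairs (much as the $n>2$ reduction inside that lemma's proof chains the points $x_{j,k}$), while also tracking which functionals are allowed to be zero. A secondary technical point is justifying the passage from the formula on the ``generating'' functions $\psi c_e$ to all of $A(X,E)$; this is where niceness of $A(X,E)$ is essential, and one likely needs that finite sums $\sum\psi_i c_{e_i}$ are dense in a suitable sense or at least suffice to determine $f(\Phi(y))$, together with the additivity (not full linearity) of $T$.
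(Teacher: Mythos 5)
Your scalarization set-up (the functionals $f\mapsto v^*(T(fc_e)(y))$ on $A(X)$, the appeal to Lemma \ref{fun} with $n=2$, the chaining argument showing the point is independent of $(e,e',v^*,w^*)$ and common to $T$ and $S$, and the definition $\Lambda_y(e)=T(c_e)(y)$) is essentially the paper's argument, and your treatment of the ``in particular'' clause is fine. But there is a genuine gap exactly at the step you flag as ``secondary'': the passage from the formula on tensor-type functions $\psi c_e$ ($\psi\in A(X)$, $e\in E$) to arbitrary $f\in A(X,E)$. You propose to use sup-norm density of finite sums $\sum_i \psi_i c_{e_i}$ inside $A(X,E)$, but nothing in the hypotheses gives this: a nice subspace only satisfies $A(X)\cdot A(X,E)\subseteq A(X,E)$ with $A(X)$ a boundedly normal \emph{subspace} (not an algebra) containing constants. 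Bounded normality yields Urysohn-type functions but no partitions of unity in $A(X)$ (there is no way to normalize without an algebra or inverse structure), so one cannot manufacture tensor approximants of a general $f\in A(X,E)$, and the density claim is unavailable in this generality. Since the evaluation $\delta_y\circ T$ is only known to be $\|\cdot\|_\infty$-continuous, this is the load-bearing step, and your proof does not close it.

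The paper closes it by a different mechanism that never needs tensor density. Fix $y\in Y_c$, $x=\Phi(y)$. One reduces to showing that $h(x)=0$ forces $v^*(Th(y))=0$ (then apply this to $h-c_{h(x)}$ and use the tensor formula only for constants). This implication is proved in two stages: first, if $h$ vanishes on a neighborhood $U$ of $x$, pick $g\in A(X)$ with $g(x)=1$, $g=0$ off $U$, and use $y\in Y_0$ to find $e'\in E$, $w^*\in F^*$ with $w^*(S(c_{e'})(y))\neq 0$; then $G=ge'$ satisfies $\coz(h)\cap\coz(G)=\emptyset$ and $w^*(SG(y))\neq 0$, so joint separation kills $v^*(Th(y))$ --- note that the separation hypothesis is used again here, not only to pin down $\Phi(y)$. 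Second, a general $h$ with $h(x)=0$ is approximated in sup norm \emph{within} $A(X,E)$ by $h_n=g_n h$, where $g_n\in A(X)$ are uniformly bounded Urysohn functions vanishing near $x$ and equal to $1$ where $\|h\|\ge 1/n$; this uses precisely what niceness provides (multiplication by $A(X)$, not tensors), and the $\|\cdot\|_\infty$-weak continuity of $\delta_y\circ T$ defining $Y_c$ finishes the limit. Replacing your density step by this bump-function-plus-multiplication argument is what is needed to make the proof work under the stated hypotheses.
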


We prove the theorem through the subsequent  lemmas.

In the sequel we assume that $X,Y$ and $E,F$ and also $A(X,E)
\subseteq C(X,E)$ and $T,S: A(X,E)\longrightarrow C(Y,F)$ are as
in the theorem.

By hypotheses there exists a boundedly normal subspace $A(X)$ of
$C(X,\Bbb K)$ such that $A(X)\cdot A(X,E) \subseteq A(X,E)$. For
each $(y, v^*)\in Y\times F^*$ , we put
\[\varphi_{y,v^*}=v^*\circ \delta_y \circ T \; {\rm and} \;\; \psi_{y,v^*}=v^*\circ \delta_y \circ S.\] Then clearly
for each $y\in Y_c$ and each  $v^*, w^* \in F^*$, the maps
$\varphi_{y,v^*}$ and $\psi_{y,w^*}$ are continuous real-linear
functionals on $A(X,E)$ which are jointly separating.

Also for each $(y, v^*,e) \in Y\times F^*\times E$  we  define the
jointly separating additive  functionals $\varphi_{y,v^*}^e,
\psi_{y,v^*}^e: A(X)\longrightarrow \Bbb C$ by
\[\varphi_{y,v^*}^e(f)=\varphi_{y,v^*}(fe)\;\; {\rm and }\;\; \psi_{y,v^*}^e(f) =\psi_{y,v^*}(fe) \qquad \qquad(f\in A(X)).\]
Let $y\in Y_c$. Then $y\in Y_0$ and so there are $e_j\in E$ and
$v_j^* \in F^*$, $j=1,2$, such that $\varphi_{y,v_1^*}^{e_1} \neq
0$ and $\psi_{y,v_2^*}^{e_2} \neq 0$.  Motivated by this,  for
each $y\in Y_c$ we consider the following set
\[H_y=\{ (v_1^*, v_2^*, e_1, e_2) \in F^{*^2}\times E^2:  \varphi_{y,v_1^*}^{e_1} \,{\rm  and} \,
\psi_{y,v_2^*}^{e_2} \, {\rm are\, nonzero}\}. \]

In the next lemmas we assume that $\Bbb K=\Bbb C$. Similar results hold for the case $\Bbb K=\Bbb R$.

\begin{lem} Let $y\in Y_c$. Then there
exists  a unique point $x\in X$ (depending only on $y$) such that for all $(v_1^*, v_2^*, e_1,
e_2) \in H_y$,
\begin{align} \label{1}
\varphi_{y,v_1^*}^{e_1} = \varphi_{y,v_1^*}^{e_1}(1) {\rm Re}(\delta_x)+ \varphi_{y,v_1^*}^{e_1}(i)
{\rm Im}(\delta_x),
\end{align} and
\begin{align}\label{2}
\psi_{y,v_2^*}^{e_2} = \psi_{y,v_2^*}^{e_2}(1) {\rm Re}(\delta_x)+\psi_{y,v_2^*}^{e_2}(i) {\rm
Im}(\delta_x).
\end{align}
\end{lem}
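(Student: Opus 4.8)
The plan is to read off the localization (\ref{1})--(\ref{2}) for a single fixed tuple directly from Lemma \ref{fun} (applied with $n=2$), and then to show that the resulting point is the same for all tuples in $H_y$ by applying Lemma \ref{fun} once more to a ``mixed'' pair of functionals.

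Fix $y\in Y_c$ and $(v_1^*,v_2^*,e_1,e_2)\in H_y$; put $\varphi=\varphi_{y,v_1^*}^{e_1}$ and $\psi=\psi_{y,v_2^*}^{e_2}$. By the definition of $H_y$ these are nonzero additive functionals on the boundedly normal space $A(X)$, and they are $\|\cdot\|_\infty$-continuous because $\delta_y\circ T$ and $\delta_y\circ S$ are $\|\cdot\|_\infty$-weak continuous (as $y\in Y_c$) while $f\mapsto fe_j$ is a bounded map $A(X)\to A(X,E)$ with $\|fe_j\|_\infty=\|e_j\|\,\|f\|_\infty$. The pair $(\varphi,\psi)$ is jointly separating: if $f,g\in A(X)$ have disjoint cozeros, then $fe_1,ge_2\in A(X,E)$ (here one uses that $A(X,E)$ is nice and contains the constants $c_{e_j}$) and $\coz(fe_1)\cap\coz(ge_2)\subseteq\coz(f)\cap\coz(g)=\emptyset$, so the joint separation of $T,S$, together with the pointwise reformulation recalled after the definition of jointly separating maps, gives $v_1^*\big(T(fe_1)(y)\big)\,v_2^*\big(S(ge_2)(y)\big)=\varphi(f)\psi(g)=0$. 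Lemma \ref{fun} now supplies a point $x=x(v_1^*,v_2^*,e_1,e_2)\in X$ satisfying (\ref{1}) and (\ref{2}).

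The crux is that $x$ does not depend on the tuple. Given a second tuple $(w_1^*,w_2^*,d_1,d_2)\in H_y$ with associated point $x'$, the same computation shows the mixed pair $\big(\varphi_{y,v_1^*}^{e_1},\psi_{y,w_2^*}^{d_2}\big)$ is jointly separating, and both of its members are nonzero by the choice of the two tuples. Lemma \ref{fun} then yields a point $x''\in X$ localizing each of them. Since $\varphi_{y,v_1^*}^{e_1}$ is one nonzero functional localized both at $x$ and at $x''$, the point-separation argument used inside the proof of Lemma \ref{fun} (choose a constant $c$ with $\varphi_{y,v_1^*}^{e_1}(c)\neq 0$, which exists because by (\ref{1}) this nonzero functional depends on its argument only through its value at $x$; if $x\neq x''$, pick $f\in A(X)$ with $f(x)=1$ and $f(x'')=0$, available by bounded normality, and evaluate at $cf$ to get a contradiction) forces $x=x''$; symmetrically, $\psi_{y,w_2^*}^{d_2}$, being localized at $x''$ and at $x'$, forces $x''=x'$. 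Hence $x=x'$, so one point $x=x(y)$ works for every tuple in $H_y$; note $H_y\neq\emptyset$ since $y\in Y_0$, and uniqueness follows by applying the same point-separation argument to any single tuple.

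I expect the passage to the mixed pair to be the only step requiring genuine care: one must verify that pairing a ``$\varphi$'' coming from one tuple with a ``$\psi$'' coming from another still produces a jointly separating pair of nonzero functionals, so that Lemma \ref{fun} may legitimately be applied to it. Everything else — the continuity of $\varphi$ and $\psi$, the inclusions $c_{e_j}\in A(X,E)$ and $A(X)\cdot A(X,E)\subseteq A(X,E)$, and the point-separation trick itself — is already recorded in the excerpt or is an immediate consequence of the standing hypotheses that $A(X)$ is boundedly normal and that $A(X,E)$ is nice and contains constants.
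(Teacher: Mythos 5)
Your proposal is correct and follows essentially the same route as the paper: apply Lemma \ref{fun} (with $n=2$) to each tuple in $H_y$, then identify the resulting points by applying it to the mixed pair $\bigl(\varphi_{y,v_1^*}^{e_1},\psi_{y,w_2^*}^{d_2}\bigr)$ and using the point-separation argument from the proof of Lemma \ref{fun}. The only difference is that you spell out the continuity and joint-separation of the functionals $\varphi_{y,v^*}^{e}$, $\psi_{y,v^*}^{e}$, which the paper records in the setup preceding the lemma.
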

\begin{proof}
For any $(v_1^*, v_2^*, e_1, e_2) \in H_y$,
$\varphi_{y,v_1^*}^{e_1}$ and $\psi_{y,v_2^*}^{e_2}$ are nonzero
jointly separating continuous real-linear functionals on $A(X)$.
Thus by Lemma \ref{fun} there exists a point  $x\in X$ (depending on $v_1^*, v_2^*,y,e_1,e_2$)
such that
\begin{align*}
 \varphi_{y,v_1^*}^{e_1} &= \alpha_1 {\rm Re}(\delta_x)+ \beta_1 {\rm Im}(\delta_x),  \\
 \psi_{y,v_2^*}^{e_2} &= \alpha_2 {\rm Re}(\delta_x)+\beta_2 {\rm Im}(\delta_x),
\end{align*}
where $\alpha_1=\varphi_{y,v_1^*}^{e_1}(1)$, $\beta_1=\varphi_{y,v_1^*}^{e_1}(i)$ and similarly
$\alpha_2=\psi_{y,v_2^*}^{e_2}(1)$ and $\beta_2=\psi_{y,v_2^*}^{e_2}(i)$.

We now show that the point $x$ depends only on $y$.
For this, let $(w_1^*,w_2^*,e'_1,e'_2)$ be another element of $H_y$. Then by
the above argument there exists  $x' \in X$ such that
\[ \varphi_{y,w_1^*}^{e'_1}=  \alpha'_1 \,{\rm Re}(\delta_{x'})+ \beta'_1 \,{\rm Im}(\delta_{x'}),\;\; \;\;
\psi_{y,w_2^*}^{e'_2}=  \alpha'_2 \,{\rm
Re}(\delta_{x'})+\beta'_2 \,{\rm Im}(\delta_{x'}),\]
where $\alpha'_1= \varphi_{y,w_1^*}^{e'_1}(1)$, $\beta'_1=\varphi_{y,w_1^*}^{e'_1}(i)$ and also
$\alpha'_2=\psi_{y,w_2^*}^{e'_2}(1)$ and $\beta'_2=\psi_{y,w_2^*}^{e'_2}(i)$.
 Since $\varphi_{y,v_1^*}^{e_1}$ and $\psi_{y,w_2^*}^{e'_2}$ are also
 nonzero continuous real-linear jointly separating functionals on
$\A(X)$ we also have
 \[ \varphi_{y,v_1^*}^{e_1}= \alpha \,{\rm Re}(\delta_z)+ \beta \,{\rm Im}(\delta_z),\;\; {\rm and} \;\;
\psi_{y,w_2^*}^{e'_2}=\alpha' \,{\rm Re}(\delta_z)+ \beta' \,{\rm
Im}(\delta_z),\] for some $z\in X$ and scalars
$\alpha,\alpha',\beta,\beta'\in \Bbb C$. Hence,
\[ \alpha \,{\rm Re}(\delta_z)+ \beta \,{\rm Im}(\delta_z)=\varphi_{y,v_1^*}^{e_1}=\alpha_1 {\rm Re}(\delta_x)+ \beta_1 {\rm
Im}(\delta_x), \] and, as before, we can conclude that $x=z$.
Similarly $x'=z$, that is  $x=x'$.

It is easy to see that the point $x\in X$ with the desired property is unique.
\end{proof}
 By the above lemma, the point $x\in X$ is the same for all quadruples $(v_1^*,v_2^*, e_1,e_2) \in H_y$.
Hence we can define a map $\Phi: Y_c \longrightarrow X$ which
associates to each point $y\in Y_c$ the unique point $x\in
X$ such that (\ref{1}) and (\ref{2}) hold for all
$(v_1^*,v_2^*,e_1,e_2)\in H_y$.
\begin{lem}
Let  $y\in Y_c$. Then for  all $e\in E$ and $v^*\in F^*$, we have
 \[ \varphi_{y,v^*}^{e}=\varphi_{y,v^*}^{e}(1) \,{\rm
Re}(\delta_{\Phi(y)})+ \varphi_{y,v^*}^{e}(i) \,{\rm
Im}(\delta_{\Phi(y)}),\] and
\[\psi_{y,v^*}^{e}=\psi_{y,v^*}^{e}(1) \,{\rm Re}(\delta_{\Phi(y)})+ \psi_{y,v^*}^{e}(i) \,{\rm
Im}(\delta_{\Phi(y)}).\]
\end{lem}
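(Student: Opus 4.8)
The plan is to derive this lemma directly from the preceding one. That lemma already produces, for each $y\in Y_c$, a \emph{single} point $x=\Phi(y)\in X$ for which the representations (\ref{1}) and (\ref{2}) hold for \emph{every} quadruple in $H_y$ — that is, whenever $\varphi_{y,v_1^*}^{e_1}$ and $\psi_{y,v_2^*}^{e_2}$ are simultaneously nonzero. The only extra content of the present lemma is to remove this pairing restriction: I must show that each individual functional $\varphi_{y,v^*}^{e}$ (and each $\psi_{y,v^*}^{e}$) admits the decomposition in terms of ${\rm Re}(\delta_{\Phi(y)})$ and ${\rm Im}(\delta_{\Phi(y)})$, for \emph{arbitrary} $e\in E$ and $v^*\in F^*$, without asking that it be coupled to a nonzero functional of the opposite type.

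First I would fix $y\in Y_c$ and treat the statement about $\varphi$; the statement about $\psi$ is entirely symmetric. If $\varphi_{y,v^*}^{e}$ is the zero functional there is nothing to prove: the left-hand side vanishes by assumption, and the right-hand side vanishes because then $\varphi_{y,v^*}^{e}(1)=\varphi_{y,v^*}^{e}(i)=0$ (here $1,i\in A(X)$, as $A(X)$ contains constants). So assume $\varphi_{y,v^*}^{e}\neq 0$. Since $y\in Y_c\subseteq Y_0$, as already noted for points of $Y_0$ there exist $e_2\in E$ and $v_2^*\in F^*$ with $\psi_{y,v_2^*}^{e_2}\neq 0$; hence the quadruple $(v^*,v_2^*,e,e_2)$ belongs to $H_y$, and applying the previous lemma to it yields precisely (\ref{1}) with $x=\Phi(y)$, which is the first asserted identity for the given $e$ and $v^*$. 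For the second identity one argues dually: the zero case is trivial, and if $\psi_{y,v^*}^{e}\neq 0$ one uses $y\in Y_0$ to pick $e_1\in E$, $v_1^*\in F^*$ with $\varphi_{y,v_1^*}^{e_1}\neq 0$, so that $(v_1^*,v^*,e_1,e)\in H_y$ and (\ref{2}) holds with $x=\Phi(y)$.

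There is no genuine obstacle here; the proof is essentially bookkeeping, and the one point worth emphasizing is the role of the set $Y_0$. By its very definition, $y\in Y_0$ guarantees the simultaneous availability, at the point $y$, of at least one nonzero $\varphi$-type functional and at least one nonzero $\psi$-type functional; this is exactly what lets us promote any given \emph{nonzero} functional to a quadruple in $H_y$ by coupling it with a partner of the opposite type, after which the previous lemma — together with the fact that the point it produces is the common point $\Phi(y)$ for all of $H_y$ — does the rest.
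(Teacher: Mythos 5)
Your argument is correct and is essentially identical to the paper's own proof: handle the zero functional trivially, and otherwise use $y\in Y_c\subseteq Y_0$ to pair the given nonzero functional with a nonzero partner of the opposite type, producing a quadruple in $H_y$ to which the previous lemma (with its common point $\Phi(y)$) applies. No gaps.
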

\begin{proof}
 We prove the first equality, since the
other one is proven similarly.

Suppose that $y\in Y_c$ and let $e\in E$ and $v^*\in F^*$ be
arbitrary. If $\varphi_{y,v^*}^{e}=0$, then
$\varphi_{y,v^*}^{e}(1)=0=\varphi_{y,v^*}^{e}(i)$ and so the
equality is obvious. Hence we assume that $\varphi_{y,v^*}^{e}\neq
0$. Since $y\in Y_c\subseteq Y_0$ it follows easily that there
exist $w^*\in F^*$ and $e'\in E$ such that $\psi_{y,w^*}^{e'}\neq
0$. Therefore, $(v^*,w^*,e,e')\in H_y$ and consequently, by lemma
above, we have again the desired equality.
\end{proof}

Now for each $y\in Y_c$ we define real-linear operators
$\Lambda_y,\Lambda'_y: E \longrightarrow F$,  by
\[ \Lambda_y(e)=T(c_e)(y),\,\,\, {\rm and} \,\,\,\,
\Lambda'_y(e)=S(c_e)(y).\]
Let  $y\in Y_c$, and  $v^*\in E^*$. Put $x=\Phi(y)$. Using the
above lemma and the real-linearity of $\varphi_{y,v^*}$,  for each
$e\in E$ and $f\in A(X)$ we have
\begin{align*}
\varphi_{y,v^*}^{e}(f)&=\varphi_{y,v^*}^{e}(1) \,{\rm
Re}(f(x))+ \varphi_{y,v^*}^{e}(i)\, {\rm Im}(f(x))\\
&= \varphi_{y,v^*}^{e}({\rm Re}(f(x))+i{\rm Im}(f(x)))\\
&=\varphi_{y,v^*}^{e}(f(x)).
\end{align*} Hence for each $y\in Y_c$, $e\in E$ and each $v^*\in F^*$ we have   $\varphi_{y,v^*}(fe)=\varphi_{y,v^*}^{e}(f)=\varphi_{y,v^*}^e(f(\Phi(y)))$, that is
\begin{align}\label{3}
\varphi_{y,v^*}(fe)=v^*(\Lambda_y(f(\Phi(y))e))\qquad
\qquad (f\in A(X)).
\end{align}
Similarly
\begin{align}\label{4}
\psi_{y,v^*}(fe)=v^*(\Lambda'_y(f(\Phi(y))e))\qquad
\qquad (f\in A(X)).
\end{align}

\begin{lem} Let $y\in Y_c$. Then for each $h\in A(X,E)$ with
$h(\Phi(y))=0$ there exists a sequence $\{h_n\}_{n=1}^\infty$ in
$A(X,E)$ such that each $h_n$ vanishes on a neighborhood of
$\Phi(y)$ and $\|h_n-h\|_\infty \to 0$ as $n\to \infty$.
\end{lem}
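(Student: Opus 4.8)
The plan is to cut $h$ off near the point $x:=\Phi(y)$ by multiplying it with scalar ``Urysohn-type'' functions supplied by the boundedly normal space that witnesses the niceness of $A(X,E)$. Recall that, by hypothesis, there is a boundedly normal subspace $A(X)$ of $C(X,\BK)$, with associated constant $M>0$, containing constants and satisfying $A(X)\cdot A(X,E)\subseteq A(X,E)$. Since $h$ is continuous with $h(x)=0$, for each $n\in\BN$ the set $U_n=\{z\in X: \|h(z)\|<1/n\}$ is an open neighbourhood of $x$. As $X$ is compact Hausdorff, hence normal, I would choose an open set $V_n$ with $x\in V_n\subseteq \overline{V_n}\subseteq U_n$ (in the degenerate case $U_n=X$ one simply takes $h_n=0$, the zero constant, which lies in $A(X,E)$).

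Next, applying bounded normality to the disjoint closed sets $\overline{V_n}$ and $X\setminus U_n$, I obtain $g_n\in A(X)$ with $\|g_n\|_\infty\le M$, $g_n=0$ on $\overline{V_n}$ and $g_n=1$ on $X\setminus U_n$. Put $h_n:=g_nh$. Because $A(X)\cdot A(X,E)\subseteq A(X,E)$, we have $h_n\in A(X,E)$, and since $g_n$ vanishes on $\overline{V_n}$, the function $h_n$ vanishes on the neighbourhood $V_n$ of $x=\Phi(y)$, as required.

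Finally, I estimate the error. For $z\in X\setminus U_n$ we have $g_n(z)=1$, so $(g_n(z)-1)h(z)=0$; for $z\in U_n$ we have $\|h(z)\|<1/n$ and $|g_n(z)-1|\le \|g_n\|_\infty+1\le M+1$, hence $\|(g_n(z)-1)h(z)\|\le (M+1)/n$. Therefore
\[
\|h_n-h\|_\infty=\|(g_n-1)h\|_\infty\le \frac{M+1}{n}\longrightarrow 0
\]
as $n\to\infty$, which completes the argument. There is no serious obstacle here; the only point requiring a little care is the use of normality of $X$ to insert $V_n$ with $\overline{V_n}\subseteq U_n$ so that $\overline{V_n}$ and $X\setminus U_n$ are genuinely disjoint closed sets to which bounded normality applies, together with the trivial edge case $U_n=X$.
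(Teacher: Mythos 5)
Your argument is correct and follows essentially the same route as the paper: cut $h$ off near $\Phi(y)$ with the bounded-Urysohn functions $g_n$ from the boundedly normal space $A(X)$ on the closed sets $\overline{V_n}$ and $X\setminus U_n$, set $h_n=g_nh$, and estimate $\|h_n-h\|_\infty\le (M+1)/n$. The only difference is your explicit treatment of the degenerate case $U_n=X$, a minor point the paper passes over silently.
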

\begin{proof}
Assume that $x=\Phi(y)$. For each $n\in \BN$ we put
\[U_n=\{z\in X: \|h(z)\|<\frac{1}{n}\}.\]
Let $V_n$ be a neighborhood of $x$ with $\overline{V_n}\subseteq
U_n$. Then $x\in U_n$ for all $n\in \Bbb N$ and, using bounded
normality of $A(X)$, there exist $M>0$ and a sequence $\{g_n\}$ in
$A(X)$ such that $\|g_n\|_\infty<M$, $g_n=1$ on $X\backslash U_n$
and $g_n=0$ on $V_n$, for all $n\in \BN$. Put $h_n=g_n h$, $n\in \BN$. Then $h_n\in A(X,E)$ and, furthermore,
$h_n=0$ on $V_n$ and $\|h-h_n\|_\infty<\frac{1}{n} (1+M)$, that is $h_n \to h$ in $A(X,E)$.
\end{proof}

\begin{lem}
For each $y\in Y_c$ and $v^*\in F^*$
\[ \varphi_{y,v^*}(h)= v^*(\Lambda_y(h(\Phi(y)))),\] and
\[\psi_{y,v^*}(h)=v^*(\Lambda'_y(h(\Phi(y))))\] hold for all $h\in A(X,E)$.
\end{lem}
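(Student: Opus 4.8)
The plan is to prove first the \emph{local} statement that $\varphi_{y,v^*}$ and $\psi_{y,v^*}$ annihilate every $h\in A(X,E)$ with $h(\Phi(y))=0$, and then to reduce the general identity to this by subtracting a suitable constant function. Throughout I fix $y\in Y_c$ and write $x=\Phi(y)$. I shall use two facts: since $y\in Y_c$, the functionals $\varphi_{y,v^*}=v^*\circ\delta_y\circ T$ and $\psi_{y,v^*}=v^*\circ\delta_y\circ S$ are $\|\cdot\|_\infty$-continuous on $A(X,E)$; and since $y\in Y_0$, there are $e',e''\in E$ with $\Lambda'_y(e')=S(c_{e'})(y)\neq 0$ and $\Lambda_y(e'')=T(c_{e''})(y)\neq 0$.

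For the local statement about $\varphi$, suppose $h(x)=0$. By the preceding lemma I may pick $h_n\in A(X,E)$ with $\|h_n-h\|_\infty\to 0$ such that each $h_n$ vanishes on an open neighbourhood $V_n$ of $x$. Fix $n$. Applying bounded normality of $A(X)$ to the disjoint closed sets $\{x\}$ and $X\setminus V_n$, I obtain $g\in A(X)$ with $g(x)=1$ and $g=0$ outside $V_n$; then $g\,c_{e'}\in A(X)\cdot A(X,E)\subseteq A(X,E)$, and $\coz(g\,c_{e'})\subseteq V_n$ is disjoint from $\coz(h_n)\subseteq X\setminus V_n$. Since $g(x)=1$, (\ref{4}) applied with $f=g$ and $e=e'$ gives
\[ v^*\bigl(S(g\,c_{e'})(y)\bigr)=\psi_{y,v^*}(g\,c_{e'})=v^*\bigl(\Lambda'_y(e')\bigr)\qquad(v^*\in F^*), \]
so $S(g\,c_{e'})(y)=\Lambda'_y(e')\neq 0$, i.e.\ $y\in\coz\bigl(S(g\,c_{e'})\bigr)$. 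As $T,S$ are jointly separating and $\coz(h_n)\cap\coz(g\,c_{e'})=\emptyset$, we obtain $\coz(Th_n)\cap\coz\bigl(S(g\,c_{e'})\bigr)=\emptyset$, hence $y\notin\coz(Th_n)$, i.e.\ $\varphi_{y,v^*}(h_n)=v^*\bigl(Th_n(y)\bigr)=0$. Letting $n\to\infty$ and using continuity of $\varphi_{y,v^*}$ gives $\varphi_{y,v^*}(h)=0$. An entirely parallel argument---this time putting the localizing function in the first slot of the joint-separating condition and using $e''$ together with (\ref{3}) in place of $e'$ and (\ref{4})---yields $\psi_{y,v^*}(h)=0$.

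For an arbitrary $h\in A(X,E)$, put $e_0=h(x)$. Then $h-c_{e_0}\in A(X,E)$ vanishes at $x=\Phi(y)$, so the local statement and the real-linearity of $\varphi_{y,v^*}$ give
\[ \varphi_{y,v^*}(h)=\varphi_{y,v^*}(c_{e_0})=v^*\bigl(T(c_{e_0})(y)\bigr)=v^*\bigl(\Lambda_y(h(\Phi(y)))\bigr), \]
and likewise $\psi_{y,v^*}(h)=v^*\bigl(\Lambda'_y(h(\Phi(y)))\bigr)$, which is the assertion.

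The step I expect to be the crux is the implication ``$h_n$ vanishes near $\Phi(y)$'' $\Rightarrow$ ``$Th_n(y)=0$'': this is the only place the joint-separating hypothesis is invoked, and it succeeds precisely because $y\in Y_0$ furnishes a vector $e'$ for which the localizing function $g\,c_{e'}$ has $y$ in the cozero set of its $S$-image, while membership of $y$ in $Y_c$ is what legitimizes passing to the norm limit at the end. The remainder is formal manipulation with additivity and the identities (\ref{3}) and (\ref{4}).
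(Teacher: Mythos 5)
Your proof is correct and follows essentially the same route as the paper: reduce to functions vanishing at $\Phi(y)$ by subtracting the constant $c_{h(\Phi(y))}$, kill such functions by pairing them against a localizing function $g\,c_{e'}$ (resp.\ $g\,c_{e''}$) furnished by bounded normality and $y\in Y_0$ via the jointly separating hypothesis and the identities (\ref{3})--(\ref{4}), and pass to the limit using the approximation lemma together with the $\|\cdot\|_\infty$-continuity coming from $y\in Y_c$. Your explicit handling of the asymmetry (placing the localizer in the $T$-slot for the $\psi$-statement) is exactly what the paper's ``proven similarly'' glosses over, so there is no substantive difference.
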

\begin{proof}
Let $y\in Y_c$ and $v^*\in F^*$. Put $x=\Phi(y)$. To prove the
first equality,  it suffices to show that for each $h\in A(X,E)$
with $h(x)=0$ we have $\varphi_{y,v^*}(h)=0$. Indeed, if we prove
this implication, then for each $h\in A(X,E)$, the element
$e=h(x)$ in $E$ satisfies $(h-c_e)(x)=0$ and consequently
$\varphi_{y,v^*}(h-c_e)=0$ which implies, by (\ref{3}), that
\[ \varphi_{y,v^*}(h)=\varphi_{y,v^*}(c_e)=v^*(\Lambda_{y}(e))=
v^*(\Lambda_{y}(h(x))),\] as desired.

 First consider the case where $h\in A(X,E)$ vanishes  on a neighborhood $U$ of $x$. Using
the normality of $A(X)$, there exists a function $g\in A(X)$ with
$g(x)=1$ and $g=0$ on $X\backslash U$. We note that since $y\in
Y_c\subseteq Y_0$ there exists $e\in E$ and $w^*\in F^*$ such that
$w^*(S(c_e)(y))\neq 0$, that is
$w^*(\Lambda'_{y}(e))=\psi_{y,w^*}^e(1)\neq 0$. Hence for $G=ge\in
A(X,E)$, using (\ref{4}), we have
\[ \psi_{y,w^*}(G)= \psi_{y,w^*}(ge)=w^*(\Lambda'_{y}(g(x)
e))=w^*(\Lambda'_{y}(e))\neq 0.\] Since $\coz(h)\cap
\coz(G)=\emptyset$ we have $\varphi_{y,v^*}(h) \cdot
\psi_{y,v^*}(G)=0$ and being $\psi_{y,v^*}(G)\neq 0$ we get
$\varphi_{y,v^*}(h)=0$, as desired.

Now, the general case that $h\in A(X,E)$ and $h(x)=0$ follows
immediately from the lemma above and the continuity of
$\varphi_{y,v^*}$.
\end{proof}

Let $y\in Y_c$ and $v^*\in F^*$. By the above lemmas for each
$h\in A(X,E)$  we have
\[ v^*(Th(y))=\varphi_{y,v^*}(h)=v^*(\Lambda_{y}(h(\Phi(y))))\]
and \[ v^*(Sh(y))=\psi_{y,v^*}(h)=v^*(\Lambda'_{y}(h(\Phi(y)))).\]
Since these equalities holds for all $v^*\in F^*$ we get
\[Th(y)=\Lambda_y(h(\Phi(y))), \;  \;\;  Sh(y)=\Lambda'_y(h(\Phi(y))) \]
for all $y\in Y_c$ and $h\in A(X,E)$. Hence next lemma completes the proof of the first part of the
theorem.

\begin{lem}
The map $\Phi: Y_c\longrightarrow X$ is continuous.
\end{lem}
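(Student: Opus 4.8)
The plan is to use a net-level compactness argument. Since $X$ is compact Hausdorff, to prove that $\Phi$ is continuous at a point $y\in Y_c$ it suffices to show the following: if $(y_\alpha)$ is a net in $Y_c$ with $y_\alpha\to y$ and if some subnet of $(\Phi(y_\alpha))$ converges to a point $x_0\in X$, then $x_0=\Phi(y)$. Indeed, granting this, the net $(\Phi(y_\alpha))$ has at least one cluster point (by compactness of $X$), and every cluster point equals $\Phi(y)$, so $\Phi(y_\alpha)\to\Phi(y)$. Hence, after relabelling the subnet, I may assume $\Phi(y_\alpha)\to x_0$, and the task reduces to identifying $x_0$ with $\Phi(y)$.

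The engine of the argument is to freeze a single test pair $(e_1,v_1^*)$ and pass to the limit in the representation furnished by the lemma giving $\varphi_{y,v^*}^{e}=\varphi_{y,v^*}^{e}(1)\,{\rm Re}(\delta_{\Phi(y)})+\varphi_{y,v^*}^{e}(i)\,{\rm Im}(\delta_{\Phi(y)})$. Since $y\in Y_0$, I would first choose $e_1\in E$ and $v_1^*\in F^*$ with $v_1^*(T(c_{e_1})(y))\neq0$; equivalently, $a:=\varphi_{y,v_1^*}^{e_1}(1)\neq0$. Put also $b:=\varphi_{y,v_1^*}^{e_1}(i)$. That lemma applies at each $y_\alpha\in Y_c$, so for every $f\in A(X)$ and every $\alpha$,
\[ \varphi_{y_\alpha,v_1^*}^{e_1}(f)=\varphi_{y_\alpha,v_1^*}^{e_1}(1)\,{\rm Re}\big(f(\Phi(y_\alpha))\big)+\varphi_{y_\alpha,v_1^*}^{e_1}(i)\,{\rm Im}\big(f(\Phi(y_\alpha))\big). \]

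Now fix $f\in A(X)$. Because $A(X,E)$ contains constants and $A(X)\cdot A(X,E)\subseteq A(X,E)$, the functions $fe_1=f\,c_{e_1}$, $c_{e_1}$ and $c_{ie_1}$ all lie in $A(X,E)$, so $T(fe_1),T(c_{e_1}),T(c_{ie_1})\in C(Y,F)$; combining continuity of these functions on $Y$ with continuity of $v_1^*$ and with $y_\alpha\to y$ gives $\varphi_{y_\alpha,v_1^*}^{e_1}(f)=v_1^*(T(fe_1)(y_\alpha))\to\varphi_{y,v_1^*}^{e_1}(f)$, and likewise $\varphi_{y_\alpha,v_1^*}^{e_1}(1)\to a$ and $\varphi_{y_\alpha,v_1^*}^{e_1}(i)\to b$, while $f(\Phi(y_\alpha))\to f(x_0)$ since $f$ is continuous. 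Passing to the limit in the displayed identity and comparing with the same lemma evaluated at $y$ itself, I obtain $a\,{\rm Re}(f(x_0)-f(\Phi(y)))+b\,{\rm Im}(f(x_0)-f(\Phi(y)))=0$ for all $f\in A(X)$. If $x_0\neq\Phi(y)$, then bounded normality of $A(X)$ applied to the disjoint closed singletons $\{\Phi(y)\}$ and $\{x_0\}$ supplies $f\in A(X)$ with $f(\Phi(y))=0$ and $f(x_0)=1$, and the last identity collapses to $a=0$, contradicting the choice of $(e_1,v_1^*)$; hence $x_0=\Phi(y)$. The real case $\Bbb K=\Bbb R$ is handled identically, with ${\rm Re}(\delta_x)$ replaced by $\delta_x$ throughout. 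I expect the only delicate point to be the bookkeeping ensuring that $fe_1$, $c_{e_1}$, $c_{ie_1}$ genuinely belong to $A(X,E)$ — so that their $T$-images are continuous on $Y$ and the limit passage along the net is legitimate; the remainder is routine subnet/compactness manipulation.
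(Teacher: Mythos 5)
Your proof is correct, and it fills in a proof that the paper simply omits (the authors' entire proof is ``The proof is straightforward''). Every step checks out: reducing continuity to identifying cluster points of $\Phi(y_\alpha)$ is legitimate since $X$ is compact Hausdorff; the representation lemma does apply at every $y_\alpha\in Y_c$ (including those $\alpha$ for which $\varphi_{y_\alpha,v_1^*}^{e_1}=0$, since then both coefficients vanish and the identity is trivially true, so no nonvanishing hypothesis is needed along the net); the functions $fe_1$, $c_{e_1}$, $c_{ie_1}$ indeed lie in $A(X,E)$ because $A(X,E)$ contains constants and $A(X)\cdot A(X,E)\subseteq A(X,E)$, so their $T$-images are continuous on $Y$ and the limit passage is valid; and bounded normality applied to the disjoint closed singletons $\{\Phi(y)\}$ and $\{x_0\}$ yields the contradiction $a=0$. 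For comparison, a marginally shorter route is available at this point of the paper, since the identity $Th(y')=\Lambda_{y'}(h(\Phi(y')))$ for $y'\in Y_c$ has already been established: choose $e_1\in E$ and $v_1^*\in F^*$ with $v_1^*(\Lambda_y(e_1))\neq 0$, use bounded normality to get $f\in A(X)$ with $f(\Phi(y))=1$ and $f=0$ on a closed neighbourhood of $x_0$, and observe that along the subnet $T(fe_1)(y_\alpha)=\Lambda_{y_\alpha}\big(f(\Phi(y_\alpha))e_1\big)=0$ eventually, while continuity of $T(fe_1)$ on $Y$ forces $T(fe_1)(y)=\Lambda_y(e_1)\neq 0$, a contradiction; this avoids the explicit ${\rm Re}/{\rm Im}$ bookkeeping, but it is the same idea as yours, and your version has the minor virtue of relying only on the scalar-functional lemma rather than the final vector-valued representation.
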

\begin{proof}
The proof is straightforward.
\end{proof}

The second part of the theorem is easily verified.

\begin{cor}\label{cor1}
Let $X$ and $Y$ be compact Hausdorff spaces, $E$ be a complex
commutative unital normed algebra and $A(X,E)$ be a nice
subalgebra of $C(X,E)$ containing constants. If $T: A(X,E)
\longrightarrow C(Y)$ is a unital ring homomorphism, then there
exists a subset $Y_c$ of $Y$, a continuous map $\varphi: Y_c
\longrightarrow X$ and a family $(\Lambda_{y})_{y\in Y_c}$ of
real-linear ring homomorphisms such that
\[ Tf(y)= \Lambda_y(f(\varphi(y))) \qquad \qquad (f\in A(X,E), y\in Y_c)\]
Moreover, if $T$ is continuous, then $Y_c=Y$, each $\Lambda_y$ is continuous and $y\to \Lambda_y$ is continuous with respect to the strong operator topology
on $B_\BR(E,\BC)$.
\end{cor}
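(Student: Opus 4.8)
The plan is to obtain this corollary as a special case of the Theorem by taking $S=T$, and then to read off the extra algebraic structure from the behaviour of $T$ on constant functions. First I would check that the hypotheses of the Theorem are met. Since $E$ is a complex algebra we are in the case $\BK=\BC$ with target normed space $F=\BC$, and by assumption $A(X,E)$ is a nice subalgebra of $C(X,E)$ containing constants; the map $T$ is additive, being a ring homomorphism. To see that the pair $(T,T)$ is jointly separating, take $f,g\in A(X,E)$ with $\coz(f)\cap\coz(g)=\emptyset$; then for every $x\in X$ at least one of $f(x),g(x)$ vanishes, so the pointwise product $fg$, which lies in $A(X,E)$ because $A(X,E)$ is a subalgebra, is identically zero. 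Hence $Tf\cdot Tg=T(fg)=T(0)=0$ in $C(Y)$, i.e. $\coz(Tf)\cap\coz(Tg)=\emptyset$. Moreover, since $T$ is unital, $T(c_{1_E})$ is the constant function $1$ in $C(Y)$, so by the remark preceding the Theorem we have $Y_0=Y$.

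Applying the Theorem to the pair $(T,T)$ then yields a continuous map $\Phi\colon Y_c\longrightarrow X$ together with families $\{\Lambda_y\}_{y\in Y_c}$ and $\{\Lambda'_y\}_{y\in Y_c}$ of real-linear operators from $E$ to $\BC$ such that $Tf(y)=\Lambda_y(f(\Phi(y)))$ and $Tf(y)=\Lambda'_y(f(\Phi(y)))$ for all $f\in A(X,E)$ and $y\in Y_c$. Evaluating both formulas on the constant function $c_e$ gives $\Lambda_y(e)=Tc_e(y)=\Lambda'_y(e)$ for every $e\in E$, so $\Lambda_y=\Lambda'_y$; I would set $\varphi:=\Phi$, so that $Tf(y)=\Lambda_y(f(\varphi(y)))$ on $Y_c$, which is the required representation.

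It remains to verify that each $\Lambda_y$ is a unital ring homomorphism, and here I would again test against constants. For $a,b\in E$ one has $c_a c_b=c_{ab}$ as elements of $A(X,E)$, and since $T$ is a ring homomorphism $T(c_{ab})=T(c_a)\,T(c_b)$ in $C(Y)$; evaluating at $y\in Y_c$ gives $\Lambda_y(ab)=\Lambda_y(a)\Lambda_y(b)$, while $\Lambda_y(1_E)=T(c_{1_E})(y)=1$, so each $\Lambda_y$ is a real-linear unital ring homomorphism $E\to\BC$. Finally, if $T$ is continuous, the last part of the Theorem gives $Y_c=Y_0=Y$, each $\Lambda_y\in B_\BR(E,\BC)$, and the continuity of $y\mapsto\Lambda_y$ for the strong operator topology. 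The only step calling for any real care is the translation of the Theorem's hypotheses into this algebraic setting—the joint-separation property of $(T,T)$ and the identity $Y_0=Y$; once those are in place, the corollary follows from the elementary computations on constant functions indicated above.
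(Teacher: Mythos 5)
Your proposal is correct and follows essentially the same route as the paper, which simply notes that $T$ is separating and invokes the Theorem; you have merely written out the routine verifications (joint separation of the pair $(T,T)$, the identity $Y_0=Y$ from unitality, and multiplicativity of each $\Lambda_y$ via constant functions) that the paper leaves implicit.
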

\begin{proof}
The result follows immediately, since $T$ is easily verified to be a separating map.
\end{proof}
For a unital Banach algebra $E$, let $E^{-1}$ denote the open
subset of invertible elements of $E$ and for $x\in E$ let
$\rho(x)$ denote its spectral radius.

\begin{defn}{\rm
Let $X$ be a compact Hausdorff space and  $E$ be a  unital Banach
algebra. We say that a subalgebra  $\A$ of $C(X,E)$ is {\em
inverse closed} if each element $f\in \A$ with  $f(x)\in E^{-1}$
for all $x\in X$,  is invertible in $\A$, that is the function
$f^{-1}\in C(X,E)$ defined by $f^{-1}(x)=f(x)^{-1}$, $x\in X$, is
an element of $\A$.}
\end{defn}

Clearly, if $\A$ is an inverse closed subalgebra of $C(X,E)$ which
is a Banach algebra with respect to some norm,  then for each
$f\in \A$, \[\rho(f)\le  \sup_{x\in X} \rho(f(x))\le \sup_{x\in X}
\|f(x)\|=\|f\|_\infty.\]

Assume that $X$ is a compact Hausdorff space and $E$ is a
(complex) unital Banach algebra. For each invertible element $f\in
C(X,E)$, and distinct points $s,t\in X$ we have
$f^{-1}(s)-f^{-1}(t)= f^{-1}(s)\big(f(t)-f(s)\big)f^{-1}(t)$.
Using this equality we can easily see that all  Banach algebras
introduced in Example \ref{exam} are inverse closed.

By \cite{Hns}, for a compact metric space $X$ and a commutative
unital (complex) Banach algebra $E$, the maximal ideal space
$M_{\Lip(X,E)}$ of the Banach algebra  $\Lip(X,E)$ is homeomorphic
to $X \times M_E:=\{\delta_x\times \psi: x\in X, \psi\in M_E\} $,
where for each  $x\in X$ and $\psi \in M_E$, $(\delta_x\times
\psi)(f)= \psi (f(x))$ for all $f\in \Lip(X,E)$. Next corollary,
in particular,  gives a similar result for $\|\cdot\|$-continuous
complex ring homomorphisms on ${\rm Lip}(X,E)$.

\begin{cor}\label{cor2}
Let $X$ be a compact Hausdorff  space,  $E$ be a complex
commutative unital Banach algebra and $\A$ be an inverse closed
subalgebra of $C(X,E)$ which is a Banach algebra with respect to a
norm  $\|\cdot\|$ with $\|\cdot \|\ge \|\cdot\|_\infty$. Assume
that $\varphi: \A \longrightarrow \BC$ is a nonzero ring
homomorphism. Then

{\rm (i)} $\varphi$ is $\|\cdot \|_\infty$-continuous if and only if it is continuous with respect to $\|\cdot\|$.

{\rm (ii)} If $\varphi$ is $\|\cdot \|$-continuous, then there exists a point $x\in X$ and a continuous ring homomorphism $\Lambda: E \longrightarrow \Bbb C$ such that
\[\varphi(f)=\Lambda(f(x)) \qquad \qquad (f\in \A).\]
\end{cor}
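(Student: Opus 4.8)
\textbf{Proof proposal for Corollary \ref{cor2}.}

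\emph{Part (i).} One direction is immediate: since $\|\cdot\|\ge\|\cdot\|_\infty$, any $\|\cdot\|_\infty$-continuous functional is automatically $\|\cdot\|$-continuous. For the converse, suppose $\varphi$ is $\|\cdot\|$-continuous. The plan is to exploit inverse closedness together with the spectral-radius bound recorded just before the statement, namely $\rho(f)\le\|f\|_\infty$ for all $f\in\A$. Since $\varphi$ is a nonzero ring homomorphism into $\BC$, it is automatically $\BC$-linear on the complex algebra $\A$ (a nonzero multiplicative linear-over-$\BQ$ functional that is continuous must be $\BC$-linear, or one argues directly via $\varphi(\lambda 1)=\lambda$ using continuity), and for every $f\in\A$ the scalar $\varphi(f)$ lies in the spectrum of $f$ in $\A$: indeed $f-\varphi(f)1$ is in the kernel of $\varphi$, hence not invertible in $\A$. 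Therefore $|\varphi(f)|\le\rho(f)\le\|f\|_\infty$, which gives $\|\cdot\|_\infty$-continuity of $\varphi$ directly, with norm at most $1$. (Note this argument does not even use the assumed $\|\cdot\|$-continuity a second time; the real content of (i) is that $\|\cdot\|$-continuity forces $\varphi$ to be well-behaved enough — in particular $\BC$-linear and bounded — and once that is in hand the spectral bound finishes it. If one prefers, continuity with respect to $\|\cdot\|$ is what guarantees $\varphi(\lambda 1)=\lambda$ and hence the spectrum membership is meaningful.)

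\emph{Part (ii).} Assume now $\varphi$ is $\|\cdot\|$-continuous, hence by (i) also $\|\cdot\|_\infty$-continuous. I would apply Corollary \ref{cor1} with $Y$ a one-point space (equivalently, view $\varphi$ as a unital ring homomorphism $T:\A\longrightarrow C(Y)=\BC$). The hypotheses of Corollary \ref{cor1} are met: $E$ is a complex commutative unital normed algebra and $\A$ is a nice subalgebra of $C(X,E)$ containing constants (nice because all the algebras in question contain a boundedly normal subalgebra of scalars; in the generality stated one must assume or has already that $\A$ is nice — this is implicit in ``inverse closed subalgebra which is a Banach algebra'', together with the standing examples, but if not it should be added as a hypothesis). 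Corollary \ref{cor1} with the $\|\cdot\|_\infty$-continuity from (i) then yields $Y_c=Y=\{y_0\}$, a point $x=\varphi(y_0)\in X$, and a continuous real-linear ring homomorphism $\Lambda:E\longrightarrow\BC$ with $\varphi(f)=\Lambda(f(x))$ for all $f\in\A$. Finally, $\Lambda$ is in fact $\BC$-linear: applying the identity to the constant function $c_{\lambda e}$ for $\lambda\in\BC$, $e\in E$, and using that $\varphi$ is $\BC$-linear (shown in part (i)), $\Lambda(\lambda e)=\varphi(c_{\lambda e})=\lambda\,\varphi(c_e)=\lambda\,\Lambda(e)$. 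Hence $\Lambda$ is a continuous (complex) ring homomorphism $E\longrightarrow\BC$, as claimed.

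\emph{Main obstacle.} The only genuinely delicate point is the passage from ``$\BQ$-linear multiplicative'' to ``$\BC$-linear'' and the identification $\varphi(f)\in\sigma_\A(f)$ in part (i); everything else is a routine specialization of Corollary \ref{cor1}. The cleanest route is: $\varphi$ nonzero multiplicative forces $\varphi(1)=1$; for $f\in\A$ the element $f-\varphi(f)1$ lies in $\ker\varphi$, an ideal, so if it were invertible in $\A$ then $\varphi$ would be zero — contradiction; hence $\varphi(f)\in\sigma_\A(f)$ and $|\varphi(f)|\le\rho(f)\le\|f\|_\infty$. That this also pins down $\varphi(\lambda 1)=\lambda$ for all $\lambda\in\BC$ (and thence $\BC$-linearity, since $\varphi(\lambda f)=\varphi(\lambda 1)\varphi(f)$) is where one must be slightly careful to use only the ring-homomorphism property plus the already-available scalar structure, rather than circularly invoking linearity.
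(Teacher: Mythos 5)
Your part (ii) is essentially the paper's proof (the paper also just notes that $\varphi$ is unital because it is nonzero and invokes Corollary \ref{cor1}), and your remark that the ``nice, containing constants'' hypotheses of Corollary \ref{cor1} are not literally restated in Corollary \ref{cor2} is a fair observation about the paper rather than a defect of your argument. The genuine gap is in part (i) and in the last step of part (ii): your central claim that a $\|\cdot\|$-continuous nonzero ring homomorphism $\varphi:\A\longrightarrow\BC$ is automatically $\BC$-linear, with $\varphi(\lambda 1)=\lambda$ for all $\lambda\in\BC$, is false. Continuity only forces the restriction of $\varphi$ to the constants $\BC\cdot 1$ to be a continuous unital ring endomorphism of $\BC$, hence the identity \emph{or} complex conjugation, and the conjugation case really occurs --- this is precisely why Corollary \ref{cor3} (and the theorems that follow) must allow the alternative $\varphi(f)=\overline{f(x)(t)}$. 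Consequently the step ``$f-\varphi(f)1\in\ker\varphi$, hence $\varphi(f)\in\sigma_{\A}(f)$'' is unjustified as written: in the conjugation case one only gets $\overline{\varphi(f)}\in\sigma_{\A}(f)$. The bound $|\varphi(f)|\le\rho(f)\le\|f\|_\infty$ survives that repair since $|\overline{\varphi(f)}|=|\varphi(f)|$, but the repair genuinely needs the assumed $\|\cdot\|$-continuity to pin down $\varphi$ on $\BC\cdot 1$ (a discontinuous, non-surjective ring endomorphism of $\BC$ would wreck the resolvent argument), so your parenthetical claim that continuity is ``not even used a second time'' is also wrong. For the same reason, your final upgrade of $\Lambda$ to a $\BC$-linear character in part (ii) is not only unnecessary (the statement asks only for a continuous ring homomorphism) but false in general.

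For comparison, the paper sidesteps all questions about how $\varphi$ acts on scalars by the standard power trick: from $|\varphi(f)|^{n}=|\varphi(f^{n})|\le K\|f^{n}\|$ one gets $|\varphi(f)|\le K^{1/n}\|f^{n}\|^{1/n}\longrightarrow\rho(f)\le\|f\|_\infty$, the last inequality being the remark before the corollary, which is where inverse closedness enters. That argument uses only additivity, multiplicativity and $\|\cdot\|$-continuity, so if you want to keep your spectral-membership route you must first prove (not assume) that $\varphi|_{\BC\cdot1}$ is the identity or conjugation and then treat the two cases as above; otherwise the power-trick argument is both shorter and free of the linearity issue.
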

\begin{proof}
(i) For the if part, assume that $\varphi$ is continuous with
respect to the norm $\|\cdot\|$. Then there exists $K>0$ such that
$|\varphi(f)|\le K \|f\|$ for all $f\in \A$. Hence for all $n\in
\BN$
\[ |\varphi(f)|^n=|\varphi(f^n)|\le K \|f^n\| \qquad \qquad (f\in \A),\]
which implies that $|\varphi(f)|\le K^{\frac{1}{n}} \|f^n\|^
{\frac{1}{n}}$ for all $f\in \A$. Tending $n$ to infinity, we get
$|\varphi(f)|\le \rho(f)\le \|f\|_\infty$, that is $\varphi$ is
$\|\cdot\|_\infty$-continuous.

The only if part is trivial.

(ii) We note that the ring homomorphism $\varphi$ is unital, since
it is nonzero.  Hence, the result is immediate from Corollary
\ref{cor1}.
\end{proof}
\begin{cor}\label{cor3}
Let $X$ be a compact metric space, $K$ be a compact Hausdorff
space and $\varphi: \Lip(X,C(K)))  \longrightarrow \Bbb C$ be a
nonzero continuous ring homomorphism. Then there exist  points
$x\in X$ and $t\in K$ such that either
\[\varphi(f)= f(x)(t)\qquad \qquad (f\in \Lip(X,C(K)))\]
or
\[\varphi(f)=\overline{f(x)(t)} \qquad \qquad (f\in \Lip(X,C(K))).\]
\end{cor}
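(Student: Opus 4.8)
The plan is to reduce the statement to Corollary~\ref{cor2} applied with $E=C(K)$, and then to classify the continuous ring homomorphisms from $C(K)$ to $\BC$. First I would note that $C(K)$ is a complex commutative unital Banach algebra and that $\Lip(X,C(K))$ is one of the inverse closed nice Banach algebras described in Example~\ref{exam}(i) (it contains the constants), with norm $\|\cdot\|_{\Lip}$ satisfying $\|\cdot\|_{\Lip}\ge\|\cdot\|_\infty$. Since $\varphi$ is a nonzero continuous ring homomorphism, Corollary~\ref{cor2}(ii) then provides a point $x\in X$ and a continuous ring homomorphism $\Lambda:C(K)\longrightarrow\BC$ with $\varphi(f)=\Lambda(f(x))$ for all $f\in\Lip(X,C(K))$. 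Here $\Lambda\neq 0$, for otherwise $\varphi\equiv 0$; and since $1\in C(K)$ and $\Lambda(1)=\Lambda(1)^2$, this forces $\Lambda(1)=1$.

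Next I would analyse $\Lambda$ by restricting it to the isometric copy $\BC\cdot 1$ of $\BC$ inside $C(K)$ given by the constant functions. This restriction is a continuous unital ring endomorphism of $\BC$; being additive and continuous it is the identity on $\BR$, and from $\Lambda(i)^2=\Lambda(-1)=-1$ we get $\Lambda(i)=\pm i$, so $\Lambda|_\BC$ is either the identity or complex conjugation. In the first case $\Lambda(\lambda g)=\Lambda(\lambda)\Lambda(g)=\lambda\Lambda(g)$ for $\lambda\in\BC$, so $\Lambda$ is a nonzero character of $C(K)$; since the maximal ideal space of $C(K)$ is $K$, there is $t\in K$ with $\Lambda(g)=g(t)$ for all $g\in C(K)$, and hence $\varphi(f)=f(x)(t)$. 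In the second case, $g\mapsto\overline{\Lambda(g)}$ is again a continuous ring homomorphism, now restricting to the identity on $\BC$, so by the first case it equals $\delta_t$ for some $t\in K$; thus $\Lambda(g)=\overline{g(t)}$ and $\varphi(f)=\overline{f(x)(t)}$, which completes the argument.

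The only point requiring care is that $\Lambda$ is a priori merely a \emph{ring} homomorphism, so $\BC$-linearity cannot be assumed from the outset; the step above recovers $\BC$-linearity (or conjugate-linearity) of $\Lambda$ purely from its action on the scalars together with continuity, after which the standard description of the characters of $C(K)$ finishes the proof. The remaining verifications — that $\Lip(X,C(K))$ satisfies the hypotheses of Corollary~\ref{cor2} and that $\Lambda|_\BR=\mathrm{id}$ — are routine.
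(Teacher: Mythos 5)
Your argument is correct. The first half coincides with the paper's: both apply Corollary~\ref{cor2} with $E=C(K)$ (after checking that $\Lip(X,C(K))$ is inverse closed, contains constants, and has $\|\cdot\|_{\Lip}\ge\|\cdot\|_\infty$) to obtain $x\in X$ and a continuous ring homomorphism $\Lambda:C(K)\to\BC$ with $\varphi(f)=\Lambda(f(x))$. Where you diverge is in identifying $\Lambda$: the paper simply applies Corollary~\ref{cor2} a \emph{second} time, now viewing $C(K)=C(K,\BC)$ with $X=K$ and $E=\BC$, to get $\Lambda(h)=\tau(h(t))$ for some $t\in K$ and a continuous ring homomorphism $\tau:\BC\to\BC$, and then quotes the fact that the identity and conjugation are the only such $\tau$. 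You instead analyze $\Lambda$ by hand: its restriction to the constants is shown (via additivity plus continuity and $\Lambda(i)^2=-1$) to be the identity or conjugation, whence $\Lambda$ is $\BC$-linear or conjugate-linear, and you then invoke the identification of the character space of $C(K)$ with $K$ to get $\Lambda=\delta_t$ or $\overline{\delta_t}$. The paper's bootstrap keeps the argument entirely inside its own machinery, which is why it transfers verbatim to the other inverse closed algebras mentioned in the subsequent remark; your route is more classical and self-contained at the second step (it even re-derives the classification of continuous ring endomorphisms of $\BC$ rather than citing it), but it leans on the specific Gelfand theory of $C(K)$, i.e.\ on the fact that every character of $C(K)$ is a point evaluation, so it would need the corresponding "naturality" hypothesis if $C(K)$ were replaced by another target algebra as in Remark~\ref{rem}.
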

\begin{proof}
It is obvious that $\varphi(1)=1$. Hence, using Corollary
\ref{cor2}, we conclude that there exist a point $x\in X$ and a
continuous ring homomorphism $\Lambda: C(K) \longrightarrow \Bbb
C$ such that
\[\varphi(f)= \Lambda(f(x))\;\;\;\; (f\in \Lip(X, C(K)).\]
Using Corollary \ref{cor2} once again, we can find a point $t\in
K$ and a continuous ring homomorphism $\tau: \Bbb C
\longrightarrow \Bbb C$ such that
\[\Lambda(h)=\tau(h(t)) \qquad \qquad (h\in C(K)).\]
Since $z\mapsto z$ and $z\mapsto \overline{z}$ are the only
continuous ring homomorphisms on $\Bbb C$ we get the desired
description for $\varphi$.
\end{proof}

\begin{remark}{\rm Let $E$ be a commutative unital Banach algebra. As we noted before
all Banach algebras $\lip_\alpha(X,E)$, for a compact metric space $X$ and $\alpha\in (0,1)$,
$C^n([0,1],E)$, $\Lip^n([0,1],E)$ and $AC(X,E)$, for a compact subset $X$ of the real line,
are inverse closed. Since their norms also satisfies $\|\cdot\|\ge \|\cdot\|_\infty$,  the above
corollary holds true whenever $\Lip(X,C(K))$ is replaced by any of these Banach algebras for $E=C(K)$.
}
\end{remark}

In \cite{bot}, Botelho and Jamison gave the  description  of
unital homomorphisms $T: \Lip(X,E) \longrightarrow \Lip(Y,E)$ for
compact metric spaces $X$ and $Y$ where $Y$ is connected and the
target (Banach) algebra $E$ is either $c_0=C_0(\Bbb N)$ or
$\l_\infty=C(\beta \Bbb N)$. We note that semisimplicity of $E$
implies that $\Lip(X,E)$ is semisimple, as well. Hence such a
homomorphism $T$ is continuous with respect to the Lipschitz norms
on both sides. In the next theorem (see also Remark \ref{rem}) we
give a generalization of \cite{bot} for continuous ring
homomorphisms between some subalgebras of $C(X,E)$,
where $X$  is compact and $E$ is a certain subalgebra of $C(K)$ for an
arbitrary compact Hausorff space $K$.

For a (complex) Banach algebra $A$, let $\mathcal{R}(A)$ denote
the set of all nonzero continuous ring homomorphisms $\varphi: A \longrightarrow \BC$.

\begin{theorem}\label{conn}
Let $X$ and $Y$ be compact metric spaces such that $Y$ is
connected. Let $K$ and $K'$ be  compact Hausdorff spaces  and $T:
\Lip(X,C(K)) \longrightarrow \Lip(Y,C(K'))$ be a unital ring
homomorphism. If $T$ is continuous with respect to the Lipschitz
norms on both sides, then there are a continuous function $\tau:
K'\longrightarrow K$, a family  $\{\psi_s\}_{s\in K'}$ of continuous functions from $Y$ to $X$ and a clopen subset $\mathcal{C}$ of $K'$ such that
\[ T(f)(y)(s)=\left\lbrace
  \begin{array}{c l}
    f(\psi_s(y))(\tau(s))   &  \,\, s\in \mathcal{C}\\
    \\
    \overline{f(\psi_s(y))(\tau(s))}&  \,\, s\in K'\backslash \mathcal{C}\,\,
  \end{array}
  \right . \]
  for all $f\in \Lip(X,C(K))$  and $y\in Y$.
\end{theorem}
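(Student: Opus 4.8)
The plan is to reduce Theorem \ref{conn} to the scalar corollaries already established, exploiting the identification $\Lip(Y,C(K')) \cong \Lip(Y\times K')$ up to a norm equivalence that does not affect continuity. First I would fix $s\in K'$ and consider the evaluation $\epsilon_s\colon \Lip(Y,C(K')) \to \Lip(Y)$ given by $g \mapsto g(\cdot)(s)$; this is a continuous unital algebra homomorphism. Then for each $y\in Y$ the functional $f \mapsto T(f)(y)(s)$ is a nonzero (it sends $1$ to $1$) continuous ring homomorphism on $\Lip(X,C(K))$, so by Corollary \ref{cor3} there are $x = \psi_s(y)\in X$ and $t = \tau_{s,y}\in K$ with $T(f)(y)(s)$ equal either to $f(\psi_s(y))(\tau_{s,y})$ or to $\overline{f(\psi_s(y))(\tau_{s,y})}$. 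The substance of the proof is to show that $\tau_{s,y}$ depends only on $s$ (call it $\tau(s)$), that the alternative ``bar or no bar'' depends only on $s$ and splits $K'$ into a clopen set $\mathcal{C}$ and its complement, and that $\psi_s\colon Y\to X$ is continuous while $\tau\colon K'\to K$ is continuous.

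Next I would pin down the structure in the $Y$ variable using connectedness. For fixed $s$, the map $s\mapsto T(f)(y)(s)$ being an algebra homomorphism composed with evaluation at $y$, one gets a continuous ring homomorphism $\Lambda_{s,y}\colon C(K)\to \BC$, namely $h\mapsto T(c_h)(y)(s)$ after using that $\Lip(X,C(K))$ contains the constant functions $c_h$. By Corollary \ref{cor2}(ii) (applied to $C(K)$, whose sup-norm is its own norm) this has the form $h\mapsto \tau_1(h(\tau_{s,y}))$ for a continuous ring endomorphism $\tau_1$ of $\BC$, i.e.\ identity or conjugation. The key rigidity is that the ``conjugation indicator'' $\eta(s,y)\in\{0,1\}$ is continuous in $y$: on a connected $Y$ a continuous $\{0,1\}$-valued function is constant, so $\eta(s,y)=\eta(s)$ and likewise, by a connectedness/continuity argument on the assignment $y\mapsto \tau_{s,y}$ (continuity coming from the strong-operator-topology continuity in the ``moreover'' part of Corollary \ref{cor1} / Theorem above, since constant functions $c_{e_t}$ with $e_t$ a peaking-type element of $C(K)$ separate points of $K$), one deduces $\tau_{s,y}=\tau(s)$ independent of $y$. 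Continuity of $\eta(s,y)$ in $y$ I would get by writing $\eta(s,y)$ in terms of, say, $T(c_{ih})(y)(s) - iT(c_h)(y)(s)$ for a suitable $h$: this vanishes exactly when $\Lambda_{s,y}$ is complex-linear, and the left side is continuous in $y$ because $T(c_h)\in \Lip(Y,C(K'))\subseteq C(Y\times K')$.

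Then I would address dependence on $s$. Continuity of $s\mapsto \tau(s)$ and clopenness of $\mathcal{C}=\{s: \eta(s)=0\}$: choose for each $t\in K$ (or for enough $t$) a function $h_t\in C(K)$ with $h_t(t)=1$, and observe that $s\mapsto T(c_{h_t})(y_0)(s) = \tau(s)$-evaluation of $h_t$ at $\tau(s)$ varies continuously in $s$ since $T(c_{h_t})\in \Lip(Y,C(K'))$ means its evaluation at the fixed $y_0$ is an element of $C(K')$; running over a separating family of $h_t$ forces $\tau$ to be continuous. For clopenness of $\mathcal{C}$: the indicator $\eta(s)$ equals, for an appropriate fixed $h$ and $y_0$, whether $T(c_{ih})(y_0)(s)=i\,T(c_h)(y_0)(s)$ or $=\overline{i}\,\overline{\cdots}$; since both sides are elements of $C(K')$ in the variable $s$, the set where they agree is closed, and by the same token so is the set where the conjugate relation holds, so $\mathcal{C}$ is clopen. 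The main obstacle I anticipate is the ``$\tau_{s,y}$ and $\eta$ are independent of $y$'' step: one must carefully transfer the pointwise structure from Corollary \ref{cor3} into a statement continuous in $y$ (the corollary gives points with no a priori continuity), and this is exactly where connectedness of $Y$ is essential --- without it $\mathcal C$, $\tau$ could genuinely depend on $y$. Once these continuities and independences are in hand, substituting $f = $ constant plus ``$f$ minus its value'' and using the separating property (as in the proof of the theorem above) gives $T(f)(y)(s) = f(\psi_s(y))(\tau(s))$ or its conjugate, completing the proof; continuity of each $\psi_s$ follows from the continuity of $\Phi$ in the theorem (with $Y$ there taken to be $Y\times K'$ or by fixing $s$).
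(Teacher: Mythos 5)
Your overall strategy is the paper's: reduce to Corollary \ref{cor3} via the evaluations $f\mapsto T(f)(y)(s)$, then use connectedness of $Y$ to make the point of $K$ and the bar/no-bar alternative independent of $y$, and finally read off $\tau$, $\psi_s$ and $\mathcal{C}$. Your treatment of the conjugation indicator is fine (it is essentially the paper's observation that $(Ti)^2=-1$ forces $Ti(y)(s)\in\{i,-i\}$, so that $\{y: Ti(y)(s)=i\}$ is clopen in the connected space $Y$). The genuine gap is in the step where you conclude that $\tau_{s,y}$ does not depend on $y$. You argue that $y\mapsto\tau_{s,y}$ is continuous (via the strong-operator-topology continuity of $y\mapsto\Lambda_y$ and the fact that the constants $c_h$, $h\in C(K)$, separate points of $K$) and then invoke connectedness of $Y$. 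Continuity alone does not give constancy: $K$ is an arbitrary compact Hausdorff space and may itself be connected (for instance $K=[0,1]$), and a connected space $Y$ admits many nonconstant continuous maps into such a $K$. Nothing in your argument excludes them.

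What is needed, and what the paper's proof supplies, is a \emph{local constancy} argument exploiting the Lipschitz structure of the target and the uniform separation of point evaluations on $C(K)$. Concretely, for constants $c_\lambda$ with $\lambda\in C(K)$ one has (up to the conjugation already fixed for the given $s$) $|\lambda(\tau_{s,y})-\lambda(\tau_{s,y'})| = |T(c_\lambda)(y)(s)-T(c_\lambda)(y')(s)| \le L(T(c_\lambda))\, d(y,y') \le \|T\|\,\|\lambda\|_\infty\, d(y,y')$, hence $\|\delta_{\tau_{s,y}}-\delta_{\tau_{s,y'}}\|\le \|T\|\, d(y,y')$; since $\|\delta_u-\delta_v\|=2$ for distinct $u,v\in K$, the map $y\mapsto\tau_{s,y}$ is locally constant, and only then does connectedness force it to be constant. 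This is precisely where the hypothesis that $T$ maps into $\Lip(Y,C(K'))$ rather than merely $C(Y,C(K'))$, and the special property of $C(K)$ singled out in Remark \ref{rem}, enter; your proposal invokes neither, so the assertion $\tau_{s,y}=\tau(s)$ is unsupported as written. A minor additional point: the identification $\Lip(Y,C(K'))\cong\Lip(Y\times K')$ mentioned at the outset is not correct ($K'$ is not assumed metric, and elements of $\Lip(Y,C(K'))$ have no Lipschitz behaviour in the $K'$ variable), but since you never actually use it, this does not affect the rest of your outline.
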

\begin{proof}  Since $T$ is a continuous unital ring homomorphism, it follows that for each $\Psi\in \mathcal{R}(\Lip(Y,C(K')))$,
the map $\Psi\circ T: \Lip(X,C(K)) \longrightarrow \Bbb C$  is an
element of $\mathcal{R}(\Lip(X,C(K)))$. Hence, by Corollary
\ref{cor3} there exist  points $x\in X$ and $t\in K$ such that
either
\[\Psi(Tf)=f(x)(t) \qquad \qquad(f\in \Lip(X,C(K)))\]
or
\[\Psi(Tf)=\overline{f(x)(t)}\qquad \qquad (f\in \Lip(X,C(K))).\]
Now for each $y\in Y$ and $s\in K'$, the map $\Psi: \Lip(Y, C(K'))
\longrightarrow \Bbb C$ defined by $\Psi(g)=g(y)(s)$, $g\in
\Lip(Y,C(K'))$, is an element of  $\mathcal{R}(\Lip(Y,C(K')))$, and so
there are points $x\in X$ and $t\in K$ satisfying one of the above
equalities. Thus  we can define a map $\sigma: Y\times K'
\longrightarrow X \times K$ by $\sigma(y, s)= (x,t)$, such that
either
\[ T(f)(y)(s)=f(x)(t)\qquad \qquad (f\in \Lip(X,C(K)))\]
or
\[ T(f)(y)(s)=\overline{f(x)(t)}\qquad \qquad (f\in \Lip(X,C(K))).\]
Clearly $\sigma$ is continuous with respect to the product
topology.  Now we claim that  the point $t\in K$ in the above
equalities is independent of the choice of $y\in Y$, that is, it
 depends only on the point $s\in K'$.

First we note that $-1=T(-1)=T(i^2)=(Ti)^2$. Hence, for each $y\in
Y$ and $s\in K'$, $Ti(y)(s)\in \{i,-i\}$. For each $s\in K'$ we
put
\[A_s=\{y\in Y: Ti(y)(s)=i\}.\]
Obviously, $A_s$ is a clopen subset of $Y$ and since $Y$ is
connected we conclude that for each $s\in K'$ we have either
$A_s=Y$ or $A_s=\emptyset$. Thus for each $s\in K'$ we have either

{\bf Case I.} $Ti(y)(s)=i$ for all $y\in Y$, or

{\bf Case II.} $Ti(y)(s)=-i$ for all $y\in Y$.

Let $\pi_2: X\times K\longrightarrow K$ be defined by
$\pi_2(z,u)=u$ for $(z,u)\in X \times K$. To prove the claim, fix
an element $s\in K'$. Assume, furthermore, that Case I holds for
$s$. Then for each $y\in Y$, by the above description of $T$, we
have
\[Tf(y)(s)=f(x)(t) \qquad \qquad  (f\in \Lip(X,C(K))),\]
where $(x,t)=\sigma(y,s)$. Choose an arbitrary $y\in Y$ and put
$t=\pi_2(\sigma(y,s))$. Since $Y$ is connected, it suffices to
show that the open subset  $U_s=\{z\in Y: \pi_2(\sigma(z,s))\neq
t\}$ of $Y$ is  closed. For this, assume that $\{y_n\}$ is  a
sequence in $U_s$ converging to a point $y_0\in Y\backslash U_s$.
Put $(x_0,t_0)=\sigma(y_0,s)$ and for each $n\in \BN$, put
$(x_n,t_n)=\sigma(y_n,s)$. Then by hypothesis we have $t_0=t$ and
$t_n\neq t$ for all $n\in \BN$. For each $\lambda \in C(K)$, let
$c_\lambda: X \longrightarrow C(K)$ be the constant function
defined by $c_\lambda(x)=\lambda$, $x\in X$. Then  we have
\[T(c_\lambda)(y_0)(s)=\lambda(t)\]  and
\[T(c_\lambda)(y_n)(s)=\lambda(t_n)\] for all $\lambda\in C(K)$ and $n\in \BN$.
Therefore,
\begin{align*}
|\lambda(t_n)-\lambda(t)|
&=|T(c_\lambda)(y_n)(s)-T(c_\lambda)(y_0)(s)| \le
\|T(c_\lambda)(y_n)-
T(c_\lambda)(y_0)\|_\infty \\
&\le L(T(c_\lambda)) d(y_n,y_0)  \le \|T(c_\lambda))\| d(y_n,y_0)
\end{align*}
Since  $T$ is continuous we have
\begin{equation}\label{6}
|\lambda(t_n)-\lambda(t)|\le \|T\|\, \|\lambda\|_\infty d(y_n,y_0)
\end{equation}
 for all $\lambda \in C(K)$. This shows that
\[ \|\delta_{t_n}-\delta_t\|=\sup_{\|\lambda\|_\infty \le 1}  |\lambda(t_n)-\lambda(t)|\le \|T\|\,
d(y_n,y_0), \] where $\delta_{u}: C(K)\longrightarrow \Bbb C$ is
the evaluation functional at a point $u\in K$.    We note that for
any pair of distinct points $u,v\in K$, we have
$\|\delta_{u}-\delta_v\|=2$. Then, by the above inequality we have
$d(y_n,y_0)\ge \frac{2}{\|T\|}$ for all $n\in \BN$, which is
impossible. This argument shows that $U_s$ is a clopen subset of
$Y$. Being $Y$ connected we get $U_s=\emptyset $, which proves our
claim in Case I.

Assume now that Case II holds for the given $s\in K'$. Then for
each $y\in Y$ we have
\[Tf(y)(s)=\overline{f(x)(t)} \qquad \qquad  (f\in \Lip(X,C(K))),\]
where $(x,t)=\sigma(y,s)$. Using the same argument, as in the
previous case, we can show that the open subset  $U_s=\{z\in Y:
\pi_2(\sigma(z,s))\neq t\}$ of $Y$ is  closed. Hence
$U_s=\emptyset $, which proves our claim in Case II.

By the above argument we can define a function
$\tau:K'\longrightarrow K$ which associates to each point $s\in k'$, the unique point $t\in K$ satisfying
either
\[ T(f)(y)(s)= f(\psi_s(y))(\tau(s))\qquad \qquad (f\in \Lip(X,C(K)), y\in Y)\]
or
\[T(f)(y)(s)= \overline{f(\psi_s(y))(\tau(s))}\qquad \qquad (f\in \Lip(X,C(K)), y\in Y)\]
where, $\psi_s(y)= \pi_1(\sigma(y,s))$. It is obvious that $\psi_s:Y \longrightarrow X$ is continuous for each $s\in K'$.

To conclude the theorem,  we put
\[\mathcal{C}=\{s\in K': Ti(y)=i\,\, {\rm for \,\, all\,\, } y\in Y\}.\]
Then, by the Cases I,II we have
\[K'\backslash \mathcal{C}=\{s\in K': Ti(y)=-i \, \, {\rm for \,\, all\,\, } y\in Y\},\]
that is  $\mathcal{C}$ is a clopen subset of $K'$. Moreover, the above description of $T$  shows that if $s\in \mathcal{C}$, then  $Tf(y)(s)=f(\psi_s(y))(\tau(s))$ for all $f\in \Lip(X,C(K))$ and $y\in Y$, and  if $s\notin \mathcal{C}$, then $Tf(y)(s)=\overline{f(\psi_s(y))(\tau(s))}$ for all $f\in \Lip(X,C(K))$ and $y\in Y$, as desired.
\end{proof}
\begin{theorem}
Let $K$ and $K'$ be  compact Hausdorff spaces, and $T:\A \longrightarrow \B$ be a continuous unital
ring homomorphism where $\A$ is either $C^n([0,1],C(K))$ or $\Lip^n([0,1],C(K))$ and, similarly
$\B$ is either $C^n([0,1],C(K'))$ or $\Lip^n([0,1],C(K'))$ for some $n\in \Bbb N$.  Then there are a continuous function $\tau:
K'\longrightarrow K$, a family $\{\psi_s\}_{s\in K'}$ of continuous functions on $[0,1]$ and a clopen subset $\mathcal{C}$ of $K'$  such that

\[ T(f)(y)(s)=\left\lbrace
  \begin{array}{c l}
    f(\psi_s(y))(\tau(s))   &  \,\, s\in \mathcal{C}\\
    \\
    \overline{f(\psi_s(y))(\tau(s))}&  \,\, s\in K'\backslash \mathcal{C}\,\,
  \end{array}
  \right . \]
  for all $f\in A$  and $y\in [0,1]$.
\end{theorem}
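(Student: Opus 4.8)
The plan is to follow the proof of Theorem~\ref{conn} almost verbatim, with $[0,1]$ playing the role of the connected space $Y$ there and with the remark following Corollary~\ref{cor3} replacing Corollary~\ref{cor3} itself; that remark is exactly tailored to the algebras $\A=C^n([0,1],C(K))$ and $\A=\Lip^n([0,1],C(K))$ (and likewise for $\B$), since these are inverse-closed Banach algebras whose norms dominate $\|\cdot\|_\infty$. So first I would note that, $T$ being a continuous unital ring homomorphism, $\Psi\circ T\in\mathcal{R}(\A)$ for every $\Psi\in\mathcal{R}(\B)$. Applying this to the evaluation functionals $\Psi_{(y,s)}(g)=g(y)(s)$, $(y,s)\in[0,1]\times K'$ (which lie in $\mathcal{R}(\B)$ because they are $\|\cdot\|$-continuous and $\Psi_{(y,s)}(T1)=1\neq0$), the cited remark produces, for each $(y,s)$, points $x\in[0,1]$ and $t\in K$ with either $Tf(y)(s)=f(x)(t)$ for all $f\in\A$, or $Tf(y)(s)=\overline{f(x)(t)}$ for all $f\in\A$. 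This defines a map $\sigma:[0,1]\times K'\to[0,1]\times K$, $\sigma(y,s)=(x,t)$, which is continuous for the product topologies (test against the constant functions $c_\lambda$ with $\lambda\in C(K)$ and the polynomial $y\mapsto y\cdot 1$ to recover the two coordinates, exactly as in Theorem~\ref{conn}).

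Next, just as in Theorem~\ref{conn}, $-1=T(i^2)=(Ti)^2$ gives $Ti(y)(s)\in\{i,-i\}$ for all $y,s$; since $y\mapsto Ti(y)(s)$ is continuous into the discrete set $\{i,-i\}$ and $[0,1]$ is connected, $Ti(y)(s)$ is independent of $y$, say $Ti(y)(s)=g(s)$, and $g=Ti(y_0)\in C(K')$, so $\mathcal{C}:=\{s\in K':g(s)=i\}$ is a clopen subset of $K'$. On $\mathcal{C}$ the conjugation-free alternative $Tf(y)(s)=f(x)(t)$ holds for all $y$, and on $K'\setminus\mathcal{C}$ the conjugated one holds.

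The one step that genuinely uses the hypotheses on $\A,\B$ — and which I expect to be the (mild) crux — is showing that the $K$-coordinate $t=\pi_2(\sigma(y,s))$ depends only on $s$. In Theorem~\ref{conn} this rested on the estimate $\|f(y_n)-f(y_0)\|_\infty\le L(f)\,d(y_n,y_0)$ for Lipschitz $f$; here I would instead use that every $h$ in $C^n([0,1],C(K'))$ (resp.\ $\Lip^n([0,1],C(K'))$) is $C^1$, hence $\|h(y)-h(y')\|_\infty\le\|h'\|_\infty\,|y-y'|\le\|h\|\,|y-y'|$, the last inequality holding because $\|h'\|_\infty/1!$ (resp.\ $\|h'\|_\Lip/1!$) is one of the summands defining the norm on $\B$. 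Since $\|c_\lambda\|=\|\lambda\|_\infty$ for $\lambda\in C(K)$, this yields $\|T(c_\lambda)\|\le\|T\|\,\|\lambda\|_\infty$, and so, for a sequence $y_n\to y_0$ in $[0,1]$ with $t_n=\pi_2(\sigma(y_n,s))$ and $t=\pi_2(\sigma(y_0,s))$ (taking $s\in\mathcal{C}$; the other case is identical),
\[
|\lambda(t_n)-\lambda(t)|=|T(c_\lambda)(y_n)(s)-T(c_\lambda)(y_0)(s)|\le\|T(c_\lambda)\|\,|y_n-y_0|\le\|T\|\,\|\lambda\|_\infty\,|y_n-y_0|
\]
for all $\lambda\in C(K)$, whence $\|\delta_{t_n}-\delta_t\|\le\|T\|\,|y_n-y_0|\to0$. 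Since $\|\delta_u-\delta_v\|=2$ for distinct $u,v\in K$, this forces $t_n=t$ for large $n$, so $\{z\in[0,1]:\pi_2(\sigma(z,s))\neq\pi_2(\sigma(y,s))\}$ is clopen, hence empty by connectedness, and $t$ depends only on $s$.

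Finally, this defines a continuous map $\tau:K'\to K$, $\tau(s)=t$, and putting $\psi_s(y)=\pi_1(\sigma(y,s))$ gives a family $\{\psi_s\}_{s\in K'}$ of continuous maps on $[0,1]$; combining the previous paragraphs then yields $T(f)(y)(s)=f(\psi_s(y))(\tau(s))$ for $s\in\mathcal{C}$ and $T(f)(y)(s)=\overline{f(\psi_s(y))(\tau(s))}$ for $s\in K'\setminus\mathcal{C}$, for all $f\in\A$ and $y\in[0,1]$, which is the assertion. Apart from the norm estimate above, every ingredient is a transcription of the proof of Theorem~\ref{conn}.
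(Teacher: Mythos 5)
Your proposal is correct and matches the paper's own proof: the paper likewise reduces everything to the argument of Theorem~\ref{conn} (with the remark after Corollary~\ref{cor3} supplying the scalar ring-homomorphism description for these algebras), the only new ingredient being exactly your mean-value estimate $\|T(c_\lambda)(y_n)-T(c_\lambda)(y_0)\|_\infty\le\|T(c_\lambda)'\|_\infty|y_n-y_0|\le\|T(c_\lambda)\|\,|y_n-y_0|$ replacing the Lipschitz inequality in~(\ref{6}).
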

\begin{proof}
The proof is basically the same proof as in Theorem \ref{conn}. It
suffices to note that for the inequality (\ref{6}) we
may use the vector-valued mean value theorem to conclude that
\begin{align*}
|\lambda(t_n)-\lambda(t)|
&=|T(c_\lambda)(y_n)(s)-T(c_\lambda)(y_0)(s)| \le
\|T(c_\lambda)(y_n)-
T(c_\lambda)(y_0)\|_\infty \\
&\le \|T(c_\lambda)'\|_\infty |y_n-y_0|\le \|T(c_\lambda)\| \,
|y_n-y|.
\end{align*}
Hence we have again
\[|\lambda(t_n)-\lambda(t)|\le \|T\|\, \|\lambda\|_\infty |y_n-y_0|\]
 for all $\lambda \in C(K)$.
\end{proof}

\begin{remark} \label{rem}
 {\rm
 The proof of the above theorems work if $C(K)$ and $C(K')$ are replaced by  natural uniform algebras $A$ and $B$
  on $K$ and $K'$, respectively, such that  for each $t\in K$, $\delta_t$ is an isolated point of $K$ with respect to the operator norm, that is for each $t\in K$ there exists $c_t>0$ such that $\|\delta_t-\delta_s\| >c_t$ holds for all points $s\in K$ distinct from $t$.  Here $\|\delta_t-\delta_s\|$ denotes the operator norm of $\delta_t-\delta_s$ on $A$. In particular, if  $A$ is a natural  uniform algebra on a compact Hausdorff space $K$ and $M>0$ such that for  all distinct points $s,t\in K$ there exists a function $f\in A$ with $f(s)=0$, $f(t)=1$ and $\|f\|_\infty\le M$, then $A$ has the desired property.
}
\end{remark}

\end{document}